









\documentclass[12pt]{amsart}
\usepackage{amsmath,amssymb,amscd,latexsym,amsthm,mathrsfs,amsfonts}
\usepackage[unicode]{hyperref}
\usepackage{fancyhdr}
\textheight22cm \textwidth15cm \hoffset-1.7cm \voffset-.5cm
\ifx\pdfoutput\undefined\else
\hypersetup{
colorlinks=true,
linkcolor=blue,
citecolor=blue,
urlcolor=blue,
filecolor=blue,
bookmarksnumbered=true,
pdfstartview=FitH,
pdfhighlight=/N
}
\fi
\usepackage{pgf,tikz}

\usepackage{array}
\allowdisplaybreaks[1]


\newtheorem{thm}{Theorem}[section] 



\newtheorem{prop}[thm]{Proposition} 


\theoremstyle{definition}



%

\usepackage{cite}

\DeclareGraphicsRule{.tif}{png}{.png}{`convert #1 `dirname #1`/`basename #1 .tif`.png}
\renewcommand{\author}[1]{\large\rm #1\\ \bigskip}
\renewcommand{\title}[1]{\bigskip\bigskip\Large\bf #1\bigskip\bigskip\\}

\begin{document}


\vglue .3 cm

\begin{center}

\title{Counting Planar Eulerian Orientations.}
\author{Andrew Elvey-Price\footnote[1]{email: andrewelveyprice@gmail.com} and Anthony J Guttmann\footnote[2]{email: guttmann@unimelb.edu.au} }
\address{School of Mathematics and Statistics\\
The University of Melbourne, Victoria 3010, Australia}
\end{center}
\setcounter{footnote}{0}

\begin{abstract}
Inspired by the paper of Bonichon, Bousquet-M\'elou, Dorbec and Pennarun \cite{BBDP16}, we give a system of functional equations which characterise the ordinary generating function, $U(x),$ for the number of planar Eulerian orientations counted by edges. We also characterise the ogf $A(x)$, for 4-valent planar Eulerian orientations counted by vertices in a similar way. The latter problem is equivalent to the 6-vertex problem on a random lattice, widely studied in mathematical physics. While unable to solve these functional equations, they immediately provide polynomial-time algorithms for computing the coefficients of the generating function. From these algorithms we have obtained 100 terms for $U(x)$ and 90 terms for $A(x).$

Analysis of these series suggests that they both behave as $const\cdot (1 - \mu x)/\log(1 - \mu x),$ where we conjecture that $\mu = 4\pi$ for Eulerian orientations counted by edges and $\mu=4\sqrt{3}\pi$ for 4-valent Eulerian orientations counted by vertices.

\end{abstract}
\makeatletter
\@setabstract
\makeatother


\section{Introduction}

Recently the problem of enumerating planar Eulerian orientations with $n$ edges was considered by Bonichon, Bousquet-M\'elou, Dorbec and Pennarun \cite{BBDP16}. Enumeration of Eulerian orientations on a given graph has been previously considered, for example by Felsner and Zickfeld \cite {FZ08} who established rigorous bounds on the growth constant for these and other combinatorial structures. The generating function for the number of rooted planar Eulerian maps\footnote {These are planar maps in which the degree of every vertex is even.} has been known since 1963 \cite{T63}. Indeed it is algebraic, and is just
$$M(t)= \frac{8t^2+12t-1+(1-8t)^{3/2}}{32t^2}.$$ As pointed out by Bonichon et al., planar maps with additional structure are much studied in both enumerative combinatorics and mathematical physics, and they give several examples. They then focus on Eulerian orientations, which restrict vertices to have equal in-degree and out-degree, and consider two classes of Eulerian orientations; the general class, counted by edges, and 4-valent Eulerian orientations counted by vertices. The latter is in the universality class of the celebrated six-vertex model on a random lattice, a problem that has been studied by Kostov \cite{K00} and Zinn-Justin \cite{ZJ00}. Unfortunately, the nature of their solutions are not in the form of a generating function that can be compared to the enumerative results of Bonichon {\em et al}, or our own more extensive enumerations. However we would expect the structure of the generating functions to be the same. Kostov gives the logarithm of the partition function as $$\log {Z} \sim \frac{c (T - T_c)^2}{\log(T-T_c)}.$$ The generating function  for the corresponding Eulerian maps should correspond to the derivative of $\log{Z},$ so the dominant term should behave as ${(T - T_c)}/{\log(T-T_c)}.$

  Let $U(x)$ be the generating function for planar Eulerian orientations, counted by edges. In this paper we find a system of functional equations which characterises the generating function $U(x)$. Similarly, we find a system of functional equations characterising the generating function $A(x)$ for 4-valent planar Eulerian orientations, counted by vertices. For each problem, these functional equations give rise to a polynomial time algorithm for computing the coefficients. 
  
Using these algorithms we have computed the first 90 coefficients of the generating function $A(x)$ and the first 100 coefficients of the generating function $U(x)$. In the final section we study these series and find that they behave as $const.  (1 - \mu x)/\log(1 - \mu x),$ where we conjecture that $\mu = 4\pi = 12.5663\ldots$ for Eulerian orientations counted by edges and $\mu=4\sqrt{3}\pi$ for 4-valent Eulerian orientations counted by vertices.

  In \cite{BBDP16}, a different approach was taken. Families of subsets and supersets were counted. These sets were indexed by a parameter $k,$ and the subsets and supersets were found to have algebraic generating functions. However the calculational difficulty increased with $k,$ so that $k=5$ was as far as they could go. With this data they calculated the generating function to 15 terms, and obtained  bounds for the growth rate $\mu$ for Eulerian orientations counted by edges, $11.22 < \mu <13.047,$ and gave the estimate $\mu \approx 12.5.$

\section{Functional equations}

In this section we derive a system of functional equations which characterise the ordinary generating functions $A(x)$ and $U(x)$ for 4-valent rooted planar Eulerian orientations counted by vertices and for rooted planar Eulerian orientations counted by edges respectively.

Recall that a {\em planar map} is a connected graph embedded on a sphere (multiple edges and loops are permitted, but edge crossings are not). A map is {\em rooted} if one of its edges is both oriented and distinguished. We call this edge the {\em root edge} and we call its source vertex the {\em root vertex}. In the following we will consider planar maps to be embedded in the plane, rather than the sphere, using the convention that the face to the left of the root edge is the outer face. Using this convention allows us to distinguish between the inner faces and the outer face of any connected subgraph of a rooted planar map.

  A {\em rooted planar Eulerian orientation} is a rooted planar map in which each edge is directed and each vertex has equal in-degree and out-degree. The direction of the root edge is not required to match the direction assigned when rooting the map.



\begin{figure}[ht]
\setlength{\captionindent}{0pt}
   \includegraphics[width=0.97\linewidth]{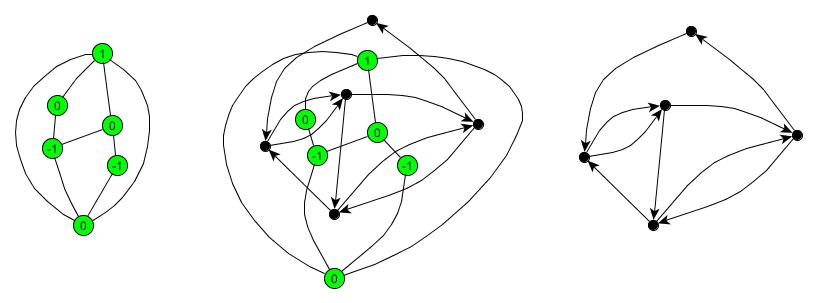} 
   \caption{An example of the transformation between an $N$-map (left of diagram) and the corresponding Eulerian orientation (right of diagram).}
   \label{fig:Nmap_to_orientation}
\end{figure}

\begin{prop} For any positive integer $n$, the number of $N$-maps with $n$ edges is equal to the number of rooted planar Eulerian orientations with $n$ edges. Also, the number of $N$-maps with $n$ edges, where each face has degree 4 is equal to the number of 4-valent rooted planar Eulerian orientations with $n$ edges.\end{prop}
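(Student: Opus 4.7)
The proof will construct an explicit bijection $\Phi$ from $N$-maps with $n$ edges to rooted planar Eulerian orientations with $n$ edges, following the local substitution illustrated in Figure \ref{fig:Nmap_to_orientation}. The plan is first to describe $\Phi$ as a local rule: at each vertex (or each distinguished motif) of the $N$-map, replace the incident configuration by the prescribed oriented subgraph on the right-hand side of the figure, and declare the image of the root edge to be the naturally associated oriented edge of the output. Since the replacement rule inserts exactly one oriented edge per edge of the $N$-map, $\Phi$ is edge-preserving by construction.

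Next I would verify that $\Phi$ lands in rooted planar Eulerian orientations. This is a purely local check: at each vertex of the image, the local rule must contribute the same number of heads as tails, which is exactly the combinatorial content built into the definition of an $N$-map. I would also confirm planarity and global consistency: the two local pictures on either side of a shared edge must agree on the orientation assigned to that edge, so that $\Phi$ assembles coherently into a planar map. The root of the image is inherited from the root of the $N$-map via the canonical choice made by the local rule at the root edge.

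To prove that $\Phi$ is a bijection, I would define the inverse $\Psi$ locally on rooted planar Eulerian orientations: at each vertex, read the cyclic pattern of in- and out-edges and match it to the unique $N$-map motif that produces it under $\Phi$. The main verification is that this local read-off is unambiguous (each oriented vertex-neighbourhood has a unique $N$-map preimage) and globally consistent, after which $\Psi \circ \Phi = \operatorname{id}$ and $\Phi \circ \Psi = \operatorname{id}$ reduce to the matching of local rules. Rooting is preserved because the root edge is tracked through $\Phi$ and $\Psi$ canonically.

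For the 4-valent statement, I would observe that in the bijection each face of the $N$-map corresponds to a vertex of the Eulerian orientation (in a dual-like manner inherent in the local rule), and that this correspondence identifies the face-degree on the $N$-map side with the vertex-degree on the orientation side. Restricting to $N$-maps with all faces of degree 4 therefore matches exactly the 4-valent Eulerian orientations, and the edge counts agree as before. The step I anticipate being the main obstacle is verifying global well-definedness of $\Phi$ and $\Psi$: that the local substitutions, although consistent around any single vertex, knit together without contradiction across the whole map, and that the rooting convention (outer face on the left of the root edge) is preserved so the two sets of rooted objects are counted on the same footing.
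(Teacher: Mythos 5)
Your proposal is a template for a proof rather than a proof: the actual content of the bijection is never specified, and the one fact that makes it work is missing. The paper's map is not a vertex-local substitution but a duality: orient each edge of the $N$-map from its lower-labelled endpoint to its higher-labelled endpoint, then pass to the dual map, rotating each edge's orientation $90^{\circ}$ clockwise. Under this correspondence a vertex of the Eulerian orientation is dual to a \emph{face} of the $N$-map, so the in-degree/out-degree condition you need to verify lives on faces of the $N$-map, not on its vertices; your first two paragraphs place the local check at the wrong locus, and you only arrive at the face/vertex duality in the final paragraph, as an observation rather than as the mechanism. The key step you omit is the reason the Eulerian condition holds: walking around any face of the $N$-map, each edge changes the label by $\pm 1$ and the walk returns to its starting label, so the number of clockwise-oriented edges equals the number of anticlockwise-oriented ones, which after the $90^{\circ}$ rotation is exactly in-degree $=$ out-degree at the dual vertex. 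Without this, the claim that ``the local rule must contribute the same number of heads as tails, which is exactly the combinatorial content built into the definition of an $N$-map'' is an assertion, not an argument.

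The inverse direction also cannot be reduced to an unambiguous local read-off at vertices. Given an Eulerian orientation, dualizing back yields an orientation of a planar map in which every face has equally many clockwise and anticlockwise edges; to recover the $N$-map one must integrate this to a labelling, i.e.\ a height function increasing by $1$ along each oriented edge. That such a labelling exists and is unique once the root vertex is labelled $0$ is a global statement (the face boundaries generate the cycle space of a planar map, so the increments sum to zero around every cycle), and it is precisely the step your plan flags as ``the main obstacle'' without addressing it. Once the duality is in place, the 4-valent claim does follow as you say, since the degree of a face equals the degree of its dual vertex.
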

\begin{proof}
Given an $N$-map, we can construct a directed map by orienting each edge from the lower number to the higher number. Then around each face, the number of clockwise edges is equal to the number of anticlockwise edges. Hence the dual of this map (where the orientations of the edges are defined by rotating the original edges $90^{\circ}$ clockwise) is an Eulerian orientation. By reversing each of these steps, we see that this transformation is a bijection. Hence, the number of $N$-maps with $n$ edges is equal to the number of rooted planar Eulerian orientations with $n$ edges. Using the same bijection, we see that the number of 4-valent rooted planar Eulerian orientations with $n$ edges is equal to the number of $N$-maps with $n$ edges, where each face has degree 4.
\end{proof}

We will occasionally refer to the height of an edge or a corner of an $N$-map. An edge is said to be at height $m+1/2$ if it joins a vertex at height $m$ to a vertex at height $m+1$. The height of a corner is simply equal to the height of the vertex which it contains. For any integer $k\geq0$, and any $N$-map $\Gamma$, let $\Sigma_{k}(\Gamma)$ be the subgraph of $\Gamma$ defined by taking only the vertices and edges of $\Gamma$ at height at least $k$. A $(\geq k)$-component of $\Gamma$ is a connected component of $\Sigma_{k}(\Gamma)$. For any $(\geq k)$-component $\tau$ of $\Gamma$, let $\hat{\tau}$ denote the connected subgraph of $\Gamma$ made up of the vertices and edges in $\tau$, along with all of the vertices and edges contained inside inner faces of $\tau$. Given any such $(\geq k)$-component $\tau$, we can form an $N$-map $\Gamma'$ by contracting all of $\hat{\tau}$ onto a single vertex $v$ at height $k$. Then $(\Gamma,\hat{\tau})$ is called an {\em upper expansion} of $(\Gamma',v)$, and $\hat{\tau}$ is called the {\em inserted component} of the upper expansion. Conversely, $(\Gamma',v)$ is called the {\em upper contraction} of $(\Gamma,\hat{\tau})$. We define {\em lower expansions} and {\em lower contractions} similarly.

\begin{figure}[ht]
\setlength{\captionindent}{0pt}
   \includegraphics[width=0.97\linewidth]{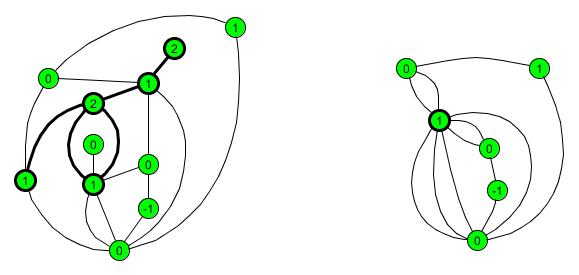} 
   \caption{On the left is an example of an $N$-map $\Gamma$ with an emphasised $(\geq1)$-component $\tau$. The upper contraction of $(\Gamma,\tau)$ is shown on the right.}
   \label{fig:Nmap_with_contraction}
\end{figure}

It will be convenient to enumerate $N$-maps in which the root edge joins the root vertex to a vertex at height 1. We will call such a map an $N^+$-map. Clearly for any $n\geq1$, exactly half of the $N$-maps with $n$ edges are $N^+$-maps. In an $N^+$-map, we will sometimes refer to the root vertex as the root-0 vertex, and the other vertex incident on the root edge as the root-1 vertex. In order to  enumerate these, we define a number of generating functions which we will relate to each other. We will start with the 4-valent case.

\subsection{Functional equations for the 4-valent case}
Let $K(x)$ be the generating function for $N^{+}$ maps, where each face has degree 4, counted by edges. Then the generating function $\tilde{A}(x)$ for 4-valent Eulerian orientations, counted by edges is given by $\tilde{A}(x)=1+2K(x)$. As the number of edges in a 4-valent orientation is exactly twice the number of vertices, the generating function $A(x)$ which counts these maps by vertices is given by the equation $A(x^2)=1+2K(x)$. The functions we use to calculate coefficients of $K$ will count the following generalisation of these $N^+$-maps. Define a $4^*$-map to be an $N^+$-map in which some vertices may be called {\em contracted}, and some corners may be highlighted, which satisfies the following properties:
\begin{itemize}
\item Each inner face has degree 2 or 4.
\item Every vertex around the outer face is at height 0 or 1.
\item In each inner face with degree 2, one of the two corners is highlighted. No other corners in the map are highlighted.
\item For any highlighted corner, the corresponding vertex must be contracted. 
\item All vertices adjacent to any given contracted vertex have the same height.
\end{itemize}
A contracted vertex is called {\em upper contracted} if the adjacent vertices are lower than it, and {\em lower contracted} otherwise.

\begin{figure}[ht]
\setlength{\captionindent}{0pt}
   \includegraphics[width=0.6\linewidth]{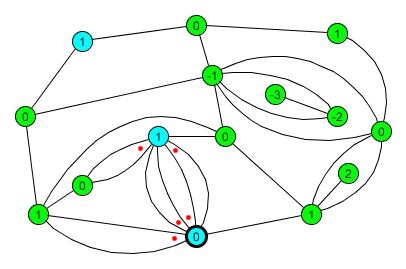} 
   \caption{An example of a $4^*$ map with the root vertex emphasised. The contracted vertices are coloured blue and the highlighted corners are shown by red dots.}
   \label{fig:4star-map}
\end{figure}

Let $\Gamma'$ be a $4^*$-map and let $v$ be an upper contracted vertex of $\Gamma'$. We will call an {\em upper expansion} $(\Gamma,\hat{\tau})$ of $(\Gamma',v)$ 4-valent if $\Gamma$ is a $4^*$-map and no vertices of $\hat{\tau}$ are contracted. 4-valent {\em lower expansions} are defined similarly.

Now we are ready to define the functions which we will use to calculate $K(x)$:
\begin{itemize}
\item Let $J(x,c)$ be the generating function for $4^*$-maps, with no contracted vertices, where $x$ counts the edges, and $c$ counts the half-degree of the outer face. We also include the graph in which the root vertex is the only vertex. This graph contributes 1 to $J(x,c)$.
\item Let $G(x,b,c)$ be the generating function for $4^*$-maps with no contracted vertices. Here $x$ counts the edges, $b$ counts the degree of the root-1 vertex $v_1$, and $c$ counts the half-degree of the outer face. We also include the graph with only 1 vertex, which contributes 1 to $G(x,b,c)$.
\item Let $P(x,a,b,c)$ be the generating function for $4^*$-maps, in which the root-0 vertex, $v_{0}$, is the only contracted vertex. Here $x$ counts the edges, $a$ counts the number of highlighted corners around $v_0$, $b$ counts the degree of the root-1 vertex $v_1$, and $c$ counts the half-degree of the outer face.
\item Finally let $\Lambda_{z}$ be the linear operator defined by $\Lambda_{z}(z^n)=[c^n]J(x,c)$.
\end{itemize}
Since there are only finitely many $4^*$-maps with any given number of vertices, each of these generating functions is a series in $x$ where each coefficient is a polynomial in the other variables. The first few terms of each series are as follows:
\begin{align*}
J(x,c)&=1+cx+2c^2x^2+(4c+5c^3)x^3+\ldots\\\\
G(x,b,c)&=1+cbx+(bc^2+b^2c^2)x^2+(2b^2c+2b^3c+2bc^3+2b^2c^3+b^3c^3)x^3+\ldots\\\\
P(x,a,b,c)&=bcx+(ab^2c+bc^2+b^2c^2)x^2\\
&+(a^2 b^3 c+a b^3 c^2+a b^2 c^2+a b c^2+b^3 c^3+2 b^3 c+2 b^2 c^3+b^2 c+2 b c^3)x^3+\ldots\end{align*}

Now we will prove that these series are characterised by the following system of equations:

\begin{align*}G(x,b,c)&=1+\Lambda_{z}(P(x,z,b,c)),\\\\
J(x,c)&=G(x,1,c),\\\\
P(x,a,b,c)&=x^2b^2\frac{P(x,a,b,c)-P(x,a,1,c)}{b-1}\\
&+xbP(x,a,b,c)(a+2[c^1]G(x,b,c))\\
&+xbc(1+P(x,a,1,c))G(x,b,c),\\\\
\Lambda_{z}\left(z^n\right)&=[c^{n}]J(x,c)\text{ for }n\geq0.\end{align*}
Moreover, we will show that the generating function $K(x)$ is given by the equation
\[K(x)=\frac{1}{x}[c^1]J(x,c).\]

\begin{figure}~
\setlength{\captionindent}{0pt}
   \put(-207,0){\includegraphics[width=0.97\linewidth]{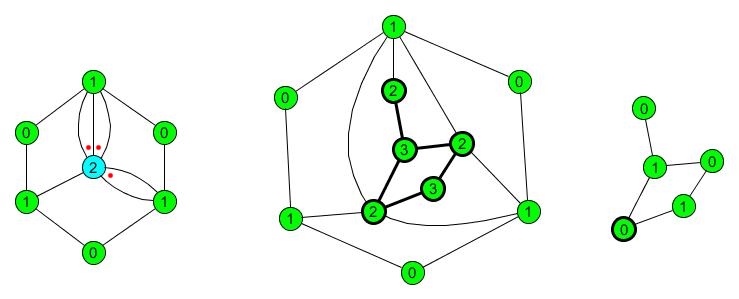}}
   \put(-200,24){\Large$\Gamma'$}
   \put(-30,14){\Large$\Gamma$}
   \put(-178,68){$e$}
   \put(-161,59){$v$}
   \put(-23,50){$e$}
   \caption{On the left is an example of a $4^{*}$-map $\Gamma'$ as in Proposition \ref{Lambdaprop}. The upper contracted vertex $v$ of lambda has height 2 and is surrounded by 3 highlighted corners. The map in the centre is a possible upper expansion $\Gamma$ of $(\Gamma',v)$ with the inserted component $\hat{\tau}$ emphasized. On the right is the corresponding $4^*$-map which is counted by $J(x,c)$, with its root vertex emphasised.}
   \label{fig:Lambda_proof}
\end{figure}
\begin{prop}\label{Lambdaprop} Let $\Gamma'$ be a $4^*$-map with an upper contracted vertex $v$ at height $k$ and let $n$ be the number of highlighted corners around $v$. Then the generating function $M_{v}(x)$ for 4-valent upper expansions $(\Gamma,\hat{\tau})$ of $(\Gamma',v)$, counted by edges in $\hat{\tau}$ is given by $M_{v}(x)=\Lambda_{z}(z^n)=[c^n]J(x,c)$.\end{prop}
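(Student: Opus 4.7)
The plan is to set up a weight-preserving bijection between the $4$-valent upper expansions $(\Gamma,\hat\tau)$ of $(\Gamma',v)$ and the $4^*$-maps $\gamma$ with no contracted vertices whose outer face has half-degree $n$. Since the latter maps are counted by $[c^n]J(x,c)$ and the bijection preserves the number of edges, this yields $M_v(x)=[c^n]J(x,c)=\Lambda_z(z^n)$ immediately.

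To construct the bijection, I would start from $\hat\tau$ itself. Because $\hat\tau$ contains no contracted vertices and every degree-$2$ inner face of the ambient $4^*$-map $\Gamma$ has its highlighted corner at a contracted vertex, $\hat\tau$ can contain no degree-$2$ inner face; every inner face of $\hat\tau$ has degree $4$. Shifting heights down by $k$ produces a candidate $\gamma$ with vertices at heights $\geq 0$ and all inner faces quadrilateral. To confirm $\gamma$ is a valid $4^*$-map I would argue that its outer boundary only meets vertices at heights $0$ and $1$: any vertex of $\hat\tau$ at original height $\geq k+2$ has all four of its $\Gamma$-edges inside $\tau$, and moreover every $\Gamma$-face meeting it is entirely within the filled interior $\hat\tau$, since the degree bound on $\Gamma$'s inner faces prevents any such face boundary from reaching height $k$ or below. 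A canonical root edge for $\gamma$ is inherited from the rooting of $\Gamma'$ together with the cyclic order of corners around $v$.

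The key step, which I expect to be the main obstacle, is to show that the outer-face half-degree of $\gamma$ equals $n$. Here I would trace the outer boundary walk of $\hat\tau$ and partition it into arcs, one arc for each corner at $v$ in $\Gamma'$, where an arc runs between two consecutive outgoing edges of $\hat\tau$. A case analysis exploiting both the degree constraint on $\Gamma$'s inner faces (degree $2$ or $4$) and the absence of inner bigons in $\hat\tau$ shows that each arc has length $0$ if its corner at $v$ is not highlighted in $\Gamma'$, and length $2$ (contributing exactly one up-edge to the outer boundary) if its corner at $v$ is highlighted. Summing the contributions of all $\deg(v)$ corners then gives outer-face half-degree equal to $n$.

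The inverse map takes any such $\gamma$, translates heights upward by $k$, and pastes it into $\Gamma'$ in place of $v$ by matching the $n$ peaks of the outer boundary walk of $\gamma$ with the $n$ highlighted corners at $v$ in cyclic order, and matching the remaining flat arcs with the non-highlighted corners at $v$; this reconstruction is forced by the data, preserves the edge count, and produces a valid $4$-valent upper expansion, completing the bijection.
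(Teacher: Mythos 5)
Your proposal is correct and follows essentially the same route as the paper: the identical height-shift bijection (subtract $k$ from all heights in $\hat{\tau}$, with the root inherited from a fixed edge of $\Gamma'$ incident on $v$), with the crux in both cases being that the outer face of $\hat{\tau}$ has degree $2n$. The paper establishes that count by classifying the inner faces of $\Gamma$ (each face straddling the boundary of $\hat{\tau}$ is a quadrilateral contributing exactly two boundary edges and corresponding to a degree-$2$ face of $\Gamma'$ highlighted at $v$), which is simply the face-side view of your corner-by-corner arc decomposition, so the two arguments coincide.
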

\begin{proof} For any non-negative integer $m$, the coefficient $[x^m][c^n]J(x,c)$ is equal to the number of $4^*$-maps, with no contracted vertices, which contain $m$ edges and where $n$ is the half-degree of the outer face. We just need to prove that these are in bijection with 4-valent upper expansions $(\Gamma,\hat{\tau})$, where $\hat{\tau}$ contains $m$ edges. Let $e$ be a fixed edge of $\Gamma'$ which is incident on $v$. For any 4-valent upper expansion $(\Gamma,\hat{\tau})$ of $(\Gamma',v)$, we will consider the vertex of $\hat{\tau}$ which is incident on $e$ to be the root vertex of $\hat{\tau}$. Now we will show that for any such upper expansion, the degree of the outer face of $\hat{\tau}$ is $2n$.

Let $F$ be any inner face of $\Gamma$. There are three possibilities: either $F$ is an inner face of $\hat{\tau}$, or $F$ contains no edges in common with $\hat{\tau}$ or $F$ contains two edges in common with each of $\hat{\tau}$ and $\Gamma\setminus\hat{\tau}$. In the first case, $F$ does not correspond to a face of $\Gamma'$. In the second case, $F$ corresponds to a face of $\Gamma'$ with the same degree. If $F$ has degree 2, the highlighted corner of $F$ involves the same vertex in $\Gamma$ and $\Gamma'$, in particular, since the vertices of $\hat{\tau}$ are not contracted, this vertex cannot be $v$. In the final case, $F$ must contain two (outer) edges of $\hat{\tau}$, so the degree of the outer face of $\hat{\tau}$ is equal to twice the number of these faces. Moreover, each of these faces corresponds to a face of degree 2 in $\Gamma'$, with a highlighted vertex at $v$. Hence, the number of these faces is equal to $n$, the number of highlighted corners around $v$. Therefore, the outer face of $\hat{\tau}$ has degree $2n$.

Since each outer edge of $\hat{\tau}$ is contained in one of these faces, this implies that each outer vertex of $\hat{\tau}$ is contained in one of these faces. Hence, the outer vertices of $\hat{\tau}$ must each be at height $k$ or $k+1$. Hence, if we subtract $k$ from the height of every vertex in $\hat{\tau}$ to form a new map, then this new map is a $4^*$-map with no contracted vertices. Moreover, this transformation is reversible, so we can take any $4^*$-map with no contracted vertices, with $m$ edges and an outer face of degree $2n$, and construct a corresponding upper expansion $(\Gamma,\hat{\tau})$ of $(\Gamma',v)$. This completes the proof that the two sets are in bijection, which implies that $M_{v}(x)=[c^n]J(x,c)=\Lambda_{z}(z^n)$.\end{proof}

Similarly to this Proposition, we obtain an equivalent result for lower expansions if $v$ is a lower contracted vertex.

Now we are ready to prove each of the equations.

\begin{prop}The generating function $G$ is given by the equation
\[G(x,b,c)=1+\Lambda_{z}(P(x,z,b,c))\]\end{prop}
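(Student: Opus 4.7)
The plan is to establish a bijective identity. The left-hand side $G(x,b,c)$ counts $4^{*}$-maps with no contracted vertices, including the trivial single-vertex map, which contributes the constant $1$. I claim that the term $\Lambda_{z}(P(x,z,b,c))$ on the right counts pairs $(\Gamma', \hat{\tau})$, where $\Gamma'$ is a $4^{*}$-map with the root-$0$ vertex $v_{0}$ as its only contracted vertex and $\hat{\tau}$ is the inserted component of a 4-valent expansion at $v_{0}$. The operator $\Lambda_{z}$ matches the $a^{n}$ coefficient in $P$ (which tracks the number of highlighted corners around $v_{0}$) with the $c^{n}$ coefficient of $J$ (which tracks the half-degree of the outer face of $\hat{\tau}$). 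My first step is therefore to set up this bijection: the trivial map on the left corresponds to the extra $+1$ on the right, and every non-trivial $4^{*}$-map $\Gamma$ with no contracted vertices should be decomposed canonically into a pair $(\Gamma', \hat{\tau})$.

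A key observation I will use is that $v_{0}$ must be lower contracted in any map counted by $P$: the root edge forces $v_{1}$ to be a neighbor of $v_{0}$ at height $1$, and since all neighbors of a contracted vertex share the same height, every neighbor of $v_{0}$ lies at height $1$. Hence I will invoke the lower-expansion analogue of Proposition~\ref{Lambdaprop}: for a fixed $\Gamma'$ with $n$ highlighted corners at $v_{0}$, the set of valid inserted components $\hat{\tau}$ is in weight-preserving bijection with the $4^{*}$-maps with no contracted vertices of outer-face half-degree $n$, so its edge-generating function is $[c^{n}]J(x,c) = \Lambda_{z}(z^{n})$.

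Next I will check that the three statistics transfer correctly. The edge count is additive, so the $x$-exponents multiply; the degree of $v_{1}$ is untouched because expansion modifies only the neighborhood of $v_{0}$; and the outer-face half-degree of $\Gamma$ equals that of $\Gamma'$ because each outer edge of $\hat{\tau}$ is absorbed into one of the $n$ newly created interior degree-4 ``interface'' faces (exactly as in the Proposition~\ref{Lambdaprop} analysis), while the outer-face corners of $v_{0}$ in $\Gamma'$ transfer bijectively to corners at outer vertices of $\hat{\tau}$ on the outer face of $\Gamma$. Summing over pairs and adding the $1$ for the trivial case then yields $G(x,b,c) = 1 + \Lambda_{z}(P(x,z,b,c))$.

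The main obstacle will be pinning down the forward direction of the bijection. Given a non-trivial $\Gamma$ counted by $G$, I must identify a unique canonical inserted component $\hat{\tau}$ around $v_{0}$ whose contraction produces a well-defined $\Gamma'$ with $v_{0}$ as its sole contracted vertex. This requires a careful local analysis at $v_{0}$, together with the outer-face half-degree preservation check; the latter does not follow directly from Proposition~\ref{Lambdaprop}, which only addresses inner faces of $\Gamma$.
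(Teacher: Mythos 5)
Your proposal is correct and follows essentially the same route as the paper, whose proof simply asserts that the non-atomic maps counted by $G$ are exactly the lower expansions around the root-$0$ vertex of the maps counted by $P$ (with the atomic map supplying the $+1$), and implicitly invokes the lower-expansion analogue of Proposition~\ref{Lambdaprop}. Your write-up is in fact more careful than the paper's one-line argument, correctly flagging the points it glosses over: that $v_{0}$ is necessarily lower contracted, that the statistics $x$, $b$, $c$ transfer, and that preservation of the outer-face half-degree needs a separate check since Proposition~\ref{Lambdaprop} only treats inner faces.
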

\begin{proof} This result follows immediately from the fact that the non-atomic $4^{*}$ maps $\Gamma$ which are counted by $G$ are exactly the lower expansions around the root-0 vertex of the maps $\Gamma'$ which are counted by $P$. The atomic map contributes 1 to $G(x,b,c)$.
\end{proof}

\begin{prop}The generating function $J$ is given by the equation
\[J(x,c)=G(x,1,c).\]
\end{prop}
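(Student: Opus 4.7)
The plan is to observe that $J(x,c)$ and $G(x,b,c)$ are generating functions for exactly the same family of combinatorial objects, the only difference being that $G$ carries an extra marking variable $b$ that tracks the degree of the root-1 vertex. Setting $b=1$ should therefore simply erase this marking and recover $J$.

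First I would verify that the underlying families coincide. By definition, both series enumerate $4^*$-maps in which no vertex is contracted, and both explicitly include the atomic single-vertex map with weight $1$. So there is a natural identity bijection between the families counted by $J$ and by $G$.

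Next I would compare the weights. For any non-atomic $4^*$-map $\Gamma$ with no contracted vertices, the contribution to $J(x,c)$ is $x^{e(\Gamma)}c^{h(\Gamma)}$, where $e(\Gamma)$ is the number of edges and $h(\Gamma)$ is the half-degree of the outer face. The contribution to $G(x,b,c)$ is $x^{e(\Gamma)}b^{\deg(v_1)}c^{h(\Gamma)}$, where $v_1$ is the root-1 vertex (which is well defined since $\Gamma$ is a non-atomic $N^+$-map). Substituting $b=1$ collapses the factor $b^{\deg(v_1)}$ to $1$ and gives exactly the weight used in $J$. The atomic map contributes $1$ to both series by convention, which is consistent with $b=1$.

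Summing over all objects in the common family then yields $G(x,1,c)=J(x,c)$. There is essentially no obstacle here: the identity is a tautological consequence of the fact that $b$ is a pure marking variable for a statistic not recorded by $J$, and the proof reduces to checking that the two definitions really are aligned on the atomic map and on non-atomic maps alike.
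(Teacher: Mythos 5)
Your proof is correct and is essentially the same as the paper's: both observe that $J$ and $G$ enumerate the identical family of $4^*$-maps with no contracted vertices, with $b$ acting only as a marking variable for the degree of the root-1 vertex, so setting $b=1$ recovers $J$. Your version just spells out the weight comparison and the atomic-map convention a little more explicitly.
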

\begin{proof}
By definition, $J(x,c)$ and $G(x,b,c)$ count the same maps, the only difference is that in $G$ there is a weight $b$ which counts the degree of the root-1 vertex, $v_1$. Hence $J(x,c)=G(x,1,c)$.
\end{proof}

\begin{figure}~
\setlength{\captionindent}{0pt}
   \put(-207,0){\includegraphics[width=0.97\linewidth]{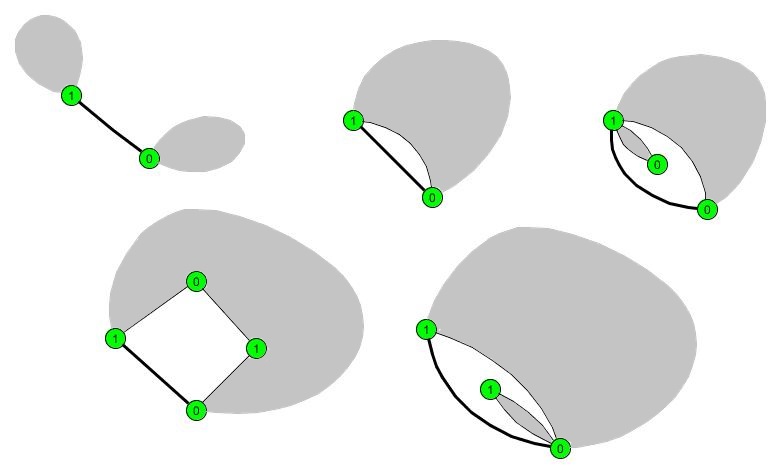}}
   \put(-157,173){$e$} 
   \put(-138,156){$v_{0}$} 
   \put(-180,189){$v_{1}$} 
   \put(-5,157){$e$} 
   \put(12,133){$v_{0}$}
   \put(-31,176){$v_{1}$} 
   \put(126,141){$e$} 
   \put(157,128){$v_{0}$} 
   \put(79,175){$u_{1}=v_{1}$} 
   \put(143,151){$u$} 
   \put(-132,42){$e$} 
   \put(-113,22){$v_{0}$} 
   \put(-156,60){$v_{1}$}
   \put(-89,63){$u_{1}$}
   \put(-107,88){$u$}
   \put(32,23){$e$} 
   \put(56,0){$v_{0}=u$} 
   \put(43,33){$u_{1}$} 
   \put(9,64){$v_{1}$} 
   \caption{The five different types of graphs which contribute to $P(x,a,b,c)$. The bottom two types are considered in the same case. In Proposition \ref{eqnforP}, The contributions to $P$ from the four cases are shown to be $xbc(P(x,a,1,c)+1)G(x,b,c)$, $abxP(x,a,b,c)$, $2xbP(x,a,b,c)[c^1]G(x,b,c)$ and $x^2b^2(P(x,a,b,c)-P(x,a,1,c))/(b-1)$, respectively.}
   \label{fig:Cases_for_P}
\end{figure}

\begin{prop}\label{eqnforP}The generating function $P$ is given by the equation
\begin{align*}P(x,a,b,c)&=x^2b^2\frac{P(x,a,b,c)-P(x,a,1,c)}{b-1}\\
&+xbP(x,a,b,c)(a+2[c^1]G(x,b,c))\\
&+xbc(1+P(x,a,1,c))G(x,b,c)\end{align*}
\end{prop}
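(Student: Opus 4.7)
The plan is to prove the equation by partitioning the set of $4^*$-maps counted by $P(x,a,b,c)$ into four disjoint classes, each corresponding to one of the four terms on the right-hand side, following the five local configurations in Figure \ref{fig:Cases_for_P} (the two bottom pictures forming a single case). In every class I would exhibit a combinatorial bijection between that class and a simpler family of $4^*$-maps counted by $P$ or $G$, tracking the weights $x$ (edges), $a$ (highlighted corners at $v_0$), $b$ (degree of $v_1$), and $c$ (half-degree of the outer face).

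The case contributing $abxP(x,a,b,c)$ is the one in which the inner face $F$ adjacent to the root edge $e=v_0v_1$ on its right is a digon. Since $v_1$ is not contracted and any degree-$2$ face must have its highlighted corner at a contracted vertex, the highlighted corner of $F$ lies at $v_0$. Removing the non-root edge of this digon together with its highlighted corner is a bijection onto arbitrary $4^*$-maps counted by $P(x,a,b,c)$; the inverse restores one edge, one highlighted corner at $v_0$, and one incidence at $v_1$, leaving the outer face unchanged, yielding the weight $abx$. The case contributing $xbc(1+P(x,a,1,c))G(x,b,c)$ is the separating configuration in which the structure at $e$ splits $\Gamma$ into a lower piece around $v_0$ (in which $v_1$ touches only $e$, counted by $P(x,a,1,c)$, with the empty lower piece giving the summand $1$) and an upper piece around $v_1$ (an arbitrary $G$-map); the distinguished edge realising the separation contributes one to the edge count, one to the degree of $v_1$, and one half-edge to the outer face (weights $x$, $b$, $c$).

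The case contributing $2xbP(x,a,b,c)[c^1]G(x,b,c)$ applies when a $G$-type piece whose outer face has degree $2$ (selected by $[c^1]$) is glued onto $v_1$ in one of two reflection-symmetric positions (hence the factor $2$), leaving a $P$-type residue and a single new edge at $v_1$ of weight $xb$. Finally, the case contributing $x^2b^2(P(x,a,b,c)-P(x,a,1,c))/(b-1)$ captures the remaining configurations (bottom row of the figure), in which two new edges are attached at $v_1$ and a $P$-type submap is inserted at an intermediate position among $v_1$'s edges; summing over insertion sites in a submap of root-$1$ degree $d$ produces the geometric factor $1+b+\cdots+b^{d-1}$, which aggregates to $(P(x,a,b,c)-P(x,a,1,c))/(b-1)$, and the two new edges incident to $v_1$ account for the prefactor $x^2b^2$.

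The main technical obstacle will be to verify that these four classes are simultaneously disjoint and exhaustive, and that in each class the decomposition is invertible with the weight contributions claimed. The most delicate bookkeeping is in the fourth case, where the $(b-1)$ denominator encodes a sum over insertion positions and one must rule out that the insertion produces a configuration already counted in the first three cases. The second case also requires care in checking that the outer face receives exactly one additional half-edge and that the split into a $P(x,a,1,c)$ lower piece and a $G(x,b,c)$ upper piece is canonically determined for every map in its class.
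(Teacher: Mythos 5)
Your decomposition is exactly the one the paper uses: the same four cases at the root edge (bridge; adjacent digon; adjacent quadrangle with $v_1=u_1$; adjacent quadrangle with $v_1\neq u_1$), with the same weight bookkeeping in each case, including the geometric sum over insertion positions at $v_1$ that produces the $(b-1)$ denominator. The disjointness and exhaustiveness you flag as the main obstacle follow immediately from that case split (the inner face behind $e$ has degree $2$ or $4$, and in the latter case its opposite corner vertex either equals $v_1$ or not), and the factor $2$ in the third case is, as your phrase ``reflection-symmetric'' suggests, the height reflection $t\mapsto 2-t$ applied to the inserted piece with outer degree $2$.
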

\begin{proof}
Let $\Gamma$ be a graph which is counted by $P$, and let vertices $v_{0}$, $v_{1}$ and edge $e$ be the root-0 vertex, root-1 vertices and root edge of $\Gamma$, respectively. First we will consider the case where removing $e$ disconnects the graph. In this case, let $\Gamma_{0}$ be the component containing $v_{0}$, and let $\Gamma_{1}$ be the component containing $v_{1}$. Since $\Gamma_{0}$ can be any $4^*$-map, where $v_{0}$ is the only contracted vertex, the possibilities for $\Gamma_{0}$ are counted by $P(x,a,1,c)+1$. The $+1$ comes from the fact that $\Gamma_{0}$ may be the atomic map. Since $\Gamma_{1}$ has no contracted vertices, the possibilities for it are counted by $G(x,b,c)$. The edge $e$ obviously contributes one edge, increases the half degree of the outer face by 1 and contributes 1 to the degree of $v_{1}$. Hence, this case contributes $xbc(P(x,a,1,c)+1)G(x,b,c)$ to the generating function $P$.

Now we will consider the case where removal of $e$ does not disconnect the graph. Then, since the face immediately anticlockwise from $e$ around $v_{0}$ is the outer face, the face on the opposite side of $e$ must be an inner face. First we will consider the case where this face has degree 2. In this case, removing $e$ forms another graph $\Gamma'$ which is counted by $P$. Since adding $e$ adds 1 to the number of edges, the number of faces with degree 2 around $v_{0}$ and the degree of $v_{1}$, the contribution from this case is $abxP(x,a,b,c)$. 

In the remaining cases, $e$ is adjacent to an inner face $F_{0}$ with degree 4. Let $v_{0},v_{1},u,u_{1}$ be the vertices around this face (in clockwise order). Now we will consider the case where $v_{1}=u_{1}$. Let $\Gamma_1$ be the map formed by the two edges of $F_{0}$ between $u$ and $v_{1}$ along with everything contained in inner faces formed by these edges. Let $\Gamma_{2}$ be the map formed from $\Gamma$ by removing $\Gamma_{1}$ and $e$. Then $\Gamma$ is uniquely determined by $\Gamma_{2}$ and $\Gamma_{1}$. Moreover, $\Gamma_{2}$ can be any map counted by $P$, so the possible maps $\Gamma_{2}$ are counted by $P(x,a,b,c)$. If $u$ is labelled $0$, then $\Gamma_{1}$ can be any $4^*$-map with no contracted vertices, with outer degree 2. If $u$ is labelled 2, then replacing every label $t$ in $\Gamma_{1}$ with $2-t$ yields any $4^*$-map with no contracted vertices, with outer degree 2. Hence, the possibilities for $\Gamma_{1}$ are counted by $2[c^1]G(x,b,c)$. Finally, the outer degree of $\Gamma$ is equal to the outer degree of $\Gamma_{2}$, and the edge $e$ contributes 1 to both the number of edges and the degree of $v_{1}$. Hence, the contribution from this case is $2xbP(x,a,b,c)[c^1]G(x,b,c)$.

Finally we are left with the case where the inner face $F_{0}$ has vertices $v_{0},v_{1},u,u_{1}$ with $v_{1}\neq u_{1}$. Let $\Gamma_{1}$ be the map formed from $\Gamma$ by identifying $u_{1}$ with $v_{1}$, then removing $e$ and one of the edges between $v_{1}$ and $u$ which borders the face with degree 2 formed between $u$ and $v_{1}$. Then $\Gamma_{1}$ can be any map counted by the generating function $P$. To reverse this procedure, we must duplicate an edge adjacent to $v_{1}$ in $\Gamma_{1}$ and also duplicate the root edge, then split the vertex $v_{1}$ into two vertices in such a way that the two faces with degree 2 join to make a quadrangle. Assume that $\Gamma_{1}$ contributes $x^{n}a^{m}b^{k}c^{l}$ to $P(x,a,b,c)$, then $\Gamma_{1}$ has $n$ edges, $m$ faces with degree 2, the outer face has degree $2l$, and $v_{1}$ has degree $k$. We will now calculate the contribution to $P(x,a,b,c)$ of all possible $4^*$-maps $\Gamma$ corresponding to this map. 

There are $k$ possible choices for the edge incident on $v_{1}$ to duplicate so as to form $\Gamma$, and for each choice, the $k+1$ resulting edges are split between $v_{1}$ and $u_{1}$. For each choice, the resulting degree of $v_{1}$ is a distinct number between 2 and $k+1$. Hence, the possible graphs $\Gamma$ are counted by
\[x^{n+2}a^{m}c^{l}(b^{k+1}+b^{k}+\ldots+b^2)=b^2x^{n+2}a^{m}c^{l}\frac{b^{k}-1}{b-1},\]
We get the total contribution from this case by summing the above expression over all maps $\Gamma_{1}$ counted by $P$, which gives
\[x^2b^2\frac{P(x,a,b,c)-P(x,a,1,c)}{b-1}.\]
Adding the contributions from each of the four cases gives the desired result.
\end{proof}

\begin{prop}
The generating function $K(x)$ for $N^{+}$ maps, where each face has degree 4 is given by
\[K(x)=\frac{1}{x}[c^1]J(x,c).\]
\end{prop}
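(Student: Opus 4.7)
The plan is to construct a size-shift bijection: $N^+$-maps with $n$ edges in which every face has degree $4$ (counted by $K$) are in bijection with $4^*$-maps having $n+1$ edges, no contracted vertices, and outer face of degree $2$ (counted by $[c^1]J(x,c)$). This will immediately give the identity $xK(x)=[c^1]J(x,c)$. The starting observation is that a $4^*$-map with no contracted vertices can contain no highlighted corners (which by definition require contracted vertices), hence no inner face of degree $2$; so all inner faces have degree $4$, and the only remaining face-degree information recorded by $J(x,c)$ is the half-degree of the outer face. Extracting $[c^1]J(x,c)$ thus restricts to $4^*$-maps whose outer face is a digon, bounded by the root edge $e$ (from the root-0 vertex to the root-1 vertex) together with a unique companion edge $e'$ also connecting root-0 to root-1.

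For the forward direction, given an $N^+$-map $\Gamma$ with $n\geq 1$ edges and every face of degree $4$, I would insert a new edge $e'$ from root-0 to root-1, placed in the outer face of $\Gamma$ immediately adjacent to the root edge $e$, so that $e$ and $e'$ jointly bound a digon. The position of $e'$ in the rotations at root-0 and root-1 is forced by the requirement that the new digon lie to the left of the root edge, so as to become the new outer face. After insertion, the digon is the new outer face (degree $2$), the remainder of the old outer face (now bounded by $e'$ in place of $e$) becomes a new inner face of degree $4$, and all other faces are unchanged. The result is a $4^*$-map with $n+1$ edges, no contracted vertices, outer face of degree $2$, all inner faces of degree $4$, and only root-0 and root-1 (heights $0$ and $1$) on the outer face, as required.

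The backward direction deletes $e'$: the degree-$2$ outer face merges with the degree-$4$ inner face across $e'$ into a new outer face of degree $2+4-2=4$, while every other face retains degree $4$, yielding an $N^+$-map in which every face has degree $4$. The two operations are manifestly inverse. The unique $4^*$-map with exactly one edge (contributing $cx$ to $J(x,c)$) corresponds under the convention of the bijection to the atomic/constant contribution $1$ to $K(x)$. The principal technical obstacle is verifying that the insertion in the forward direction is uniquely determined, which is most delicate when $e$ is a bridge of $\Gamma$ and both sides of $e$ border the same outer face; the rooting convention, which designates the face to the left of the root edge as the outer face, pins down the correct side on which to place $e'$ so that the resulting digon is the new outer face and the merged complement has the right degree.
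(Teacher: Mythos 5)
Your proof is correct and is essentially the same bijection as the paper's: the paper deletes the non-root edge bounding the degree-2 outer face of a $4^*$-map counted by $[c^1]J(x,c)$ and notes reversibility, while you describe the inverse (insertion of the parallel edge $e'$) as the primary direction, with some extra care about well-definedness and the single-edge/atomic boundary case. No substantive difference in approach.
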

\begin{proof}
The expression $[c^1]J(x,c)$ counts $4^*$-maps with outer degree 2. If we remove the edge on the outer face which is not the root edge from such a map, we get an $N^{+}$ map, where each face has degree 4. Moreover, this procedure is clearly reversible. Hence, since the procedure removes one edge, we get the desired equation.
\end{proof}

\subsection{Functional equations for the general case}

In this section we find a system of functional equations which allow us to enumerate $N^+$-maps by edges in polynomial time. Let $V(x)$ be the generating function for $N^{+}$-maps, counted by edges. Then the generating function $U(x)$ for rooted planar Eulerian orientations counted by edges is given by $U(x)=2V(x)+1$. Define an $N^{*}$ map to be an $N^{+}$ map in which some vertices may be called {\em contracted} such that all corners of the outer face have non-negative height and all vertices adjacent to a given contracted vertex must be at the same height. A contracted vertex is called {\em upper contracted} if the adjacent vertices are lower than it, and {\em lower contracted} otherwise. Finally, we define the \textit{inner degree} of a vertex in an $N^{*}$-map to be the number of corners around that vertex which are not corners of the outer face. Now we will define the other functions:
\begin{itemize}
\item Let $F(x,c)$ be the generating function for $N^{*}$-maps with no contracted vertices, where $c$ counts the number of corners of the outer face at height 0 and $x$ counts the edges. In this count we also include the map in which the root vertex is the only vertex. This contributes 1 to $F(x,c)$.
\item Let $R(x,a,b)$ be the generating function for $N^{*}$-maps, where the outer face has degree 2, in which the only contracted vertices are the root-0 vertex, $v_0$, and the root-1 vertex, $v_{1}$, where $x$ counts the edges, $a$ counts the degree of $v_{0}$ and $b$ counts the degree of $v_{1}$.
\item Let $S(x,a,b)$ be the generating function for $N^{*}$-maps, where the outer face has degree 2, in which the only contracted vertices are the root-0 vertex, $v_0$, and the root-1 vertex, $v_{1}$, and there are exactly two edges between $v_{0}$ and $v_{1}$, where $x$ counts the edges, $a$ counts the degree of $v_{0}$ and $b$ counts the degree of $v_{1}$.
\item Let $H(x,b,c)$ be the generating function for $N^{*}$-maps in which the root-1 vertex, $v_{1}$, is the only contracted vertex, where $x$ counts the edges, $b$ counts the degree of $v_{1}$ and $c$ counts the number of corners of the outer face at height 0. In this count, we also include the map in which the root-1 vertex is the only vertex. This contributes 1 to $H(x,b,c)$.
\item Let $M(x,a,c)$ be the generating function for $N^{*}$-maps in which the root-0 vertex, $v_{0}$, is the only contracted vertex, where $x$ counts the edges, $a$ counts the inner degree of $v_{0}$ and $c$ counts the number of corners of the outer face at height 0. We also include the map in which the root vertex is the only vertex. This contributes 1 to $M(x,a,c)$.
\item Let $T(x,a,b,c)$ be the generating function for $N^{*}$-maps in which the root-0 vertex, $v_{0}$, and the root-1 vertex, $v_{1}$ are the only contracted vertices, and the root edge is the only edge between these vertices, where $x$ counts the edges, $a$ counts the inner degree of $v_{0}$, $b$ counts the degree of $v_{1}$ and $c$ counts the number of corners of the outer face at height 0.
\item Finally let $\Omega_{z}$ be the linear operator defined by $\Omega_{z}(z^0)=1$ and
\[\Omega_{z}(z^n)=\sum_{j=0}^{\infty}{n+j-1\choose n-1}[c^j]F(x,c),\]
for $n>0$.
\end{itemize}
Since there are only finitely many $N^+$-maps with any given edges, each of these generating functions is a series in $x$ where each coefficient is a polynomial in the other variables. The first few terms of each series are as follows:
\begin{align*}
R(x,a,b)&=xab+x^2 a^2 b^2 +x^3 \left(a^3 b^3+a^3 b^2+a^2 b^3\right)+\ldots\\\\
S(x,a,b)&= x^2a^2b^2\begin{aligned}[t] &+x^3 (a^3 b^2+a^2b^3)+x^4 \left(2 a^4 b^2+a^3 b^3+2 a^3 b^2+2a^2 b^4+2a^2 b^3\right)\\
&+x^5 \left(5 a^5 b^2+2 a^4 b^3+8 a^4 b^2+2 a^3 b^4+5 a^3 b^3+10 a^3 b^2+5a^2 b^5+8a^2 b^4+10a^2 b^3\right)+\ldots\end{aligned}\\\\
F(x,c)&=1+c x+2 \left(c^2+c\right) x^2+\left(5 c^3+8 c^2+10 c\right) x^3+\ldots\\\\
H(x,b,c)&=1\begin{aligned}[t]+b c x&+x^2 \left(b^2 c^2+b^2 c+b c^2\right)\\
&+x^3 \left(b^3 c^3+2 b^3 c^2+2 b^3 c+2 b^2 c^3+b^2 c^2+2 b^2 c+2 b c^3+2 b c^2\right)+\ldots\end{aligned}\\\\
T(x,a,b,c)&=xbc+x^2 (b^2 c^2+bc^2)+x^3\left(ab c^2+b^3 c^3+b^3 c^2+2 b^2 c^3+2b c^3+bc^2\right)+\ldots
\end{align*}

Now we will show that these are characterised by the following system of equations:
\begin{align*}R(x,a,b)&=abx+\frac{1}{abx}R(x,a,b)S(x,a,b),\\\\
S(x,a,b)&=\Omega_{z}\left(x^2a^2b^2+\frac{zS(x,a,b)-bS(x,a,z)}{z(b-z)}R(x,a,z)b+\frac{a^2}{z^2}R(x,z,b)S(x,z,b)\right),\\\\
H(x,b,c)&=\Omega_{z}\left(\frac{1}{xbz}T(x,z,b,c)R(x,z,b)\right)+1,\\\\
M(x,a,c)&=\Omega_{z}\left(\frac{1}{xaz}T(x,a,z,c)R(x,a,z)\right)+1,\\\\
F(x,c)&=\Omega_{z}(H(x,z,c)),\\\\
\Omega_{z}(z^{0})&=1,\\\\
\Omega_{z}(z^{n})&=\sum_{j=0}^{\infty}{n+j-1\choose n-1}[c^j]F(x,c)\text{ for }n>0,\\\\
T(x,a,b,c)&=\Omega_{z}\left(\frac{T(x,a,b,c)-T(x,a,z,c)}{b-z}R(x,a,z)b\right)
+xb(c-a)H(x,b,c)M(x,a,c)+xabH(x,b,c).\end{align*}

Moreover, \[V(x)=\Omega_{y}\left(\Omega_{z}\left(\frac{1}{x^2y^2z^2}R(x,y,z)S(x,y,z)\right)\right).\]

\begin{prop}Let $\Gamma'$ be an $N^*$-map with a lower contracted vertex $v$ at height $0$ and let $n$ be the inner degree of $v$. Then the generating function $M_{v}(x)$ for lower expansions $(\Gamma,\hat{\tau})$ of $(\Gamma',v)$, where $\Gamma$ is an $N^{*}$-map, counted by edges in $\hat{\tau}$ is given by $M_{v}(x)=\Omega_{z}(z^n)$.\end{prop}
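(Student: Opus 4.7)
The plan is to establish a bijection between lower expansions $(\Gamma,\hat\tau)$ of $(\Gamma',v)$ and pairs consisting of a rooted $N^*$-map with no contracted vertices (an object counted by $F(x,c)$) together with a weak composition of its number of height-$0$ outer corners into $n$ nonnegative parts. Summing over all such pairs will yield
\[
\sum_{j\geq 0}\binom{n+j-1}{n-1}[c^j]F(x,c)=\Omega_z(z^n),
\]
which is the desired generating function.

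In the forward direction, start with a lower expansion $(\Gamma,\hat\tau)$. Because $v$ is lower contracted at height $0$, every neighbour of $v$ in $\Gamma'$ sits at height $1$, and consequently the outer vertices of $\hat\tau$ in $\Gamma$ all sit at height $0$. Flipping every height $h\mapsto -h$ in $\hat\tau$ produces a planar map $\hat\tau^*$ whose outer vertices are at height $0$ and whose outer-face corners are all at heights $\geq 0$; hence $\hat\tau^*$ is an $N^*$-map with no contracted vertices and with the same edge count as $\hat\tau$. A root is fixed on $\hat\tau^*$ by the same procedure as in Proposition~\ref{Lambdaprop}: choose a canonical reference edge $e$ of $\Gamma'$ incident to $v$, and use its image in $\Gamma$ to designate the root of $\hat\tau^*$.

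Next, the $n$ inner corners of $v$ in $\Gamma'$ inherit a canonical cyclic order $c_1,\ldots,c_n$ from the reference $e$, and the $j$ height-$0$ outer corners of $\hat\tau^*$ inherit a canonical cyclic order from the root of $\hat\tau^*$. Each inner corner of $v$ corresponds in the expansion to an arc of the outer-boundary walk of $\hat\tau$ running between two consecutive external edges at $v$; set $k_i$ equal to the number of height-$0$ outer corners of $\hat\tau^*$ lying in the arc associated with $c_i$. One checks that each height-$0$ outer corner of $\hat\tau^*$ belongs to exactly one such arc (outer corners of $v$, if any, correspond to trivial arcs containing no further height-$0$ corner), so that $k_1+\cdots+k_n=j$, yielding a weak composition. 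The inverse map takes such a pair $(\hat\tau^*,(k_1,\ldots,k_n))$, flips $\hat\tau^*$ back to $\hat\tau$, and reinserts it at $v$ with the height-$0$ outer corners distributed to the inner arcs as prescribed; the reconstructed $\Gamma$ is again an $N^*$-map because its outer-face corners are those of $\Gamma'$ together with height-$0$ outer vertices of $\hat\tau$, all at heights $\geq 0$.

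The main obstacle is the careful verification that the arcs of $\hat\tau$'s outer boundary around $v$ are in canonical bijection with the inner corners $c_1,\ldots,c_n$ of $v$ and that they jointly exhaust the height-$0$ outer corners of $\hat\tau^*$. This is a delicate planar-embedding argument requiring close tracking of how the outer-face walk of $\hat\tau$ meets the external edges incident to $v$. Once these details are in place, the bijection is manifestly edge-preserving, and summing over pairs yields $M_v(x)=\Omega_z(z^n)$.
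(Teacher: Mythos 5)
Your overall strategy matches the paper's: flip the heights of $\hat{\tau}$ to obtain an $N^*$-map with no contracted vertices, fix a root via a marked edge at $v$, and count the ways of distributing the boundary of $\hat{\tau}$ among the $n$ inner corners of $v$ as weak compositions. But there are two problems. First, your claim that ``the outer vertices of $\hat\tau$ in $\Gamma$ all sit at height $0$'' is false: only the attachment vertices of the external edges are forced to height $0$; the rest of the boundary lies at heights $\le 0$, and in fact must dip to strictly negative heights, since every edge joins heights $m$ and $m+1$ and so two height-$0$ vertices are never adjacent. (If the claim were true, $j$ would simply be the outer degree of $\hat\tau^*$ and the statement would collapse.) Second, and more seriously, the step you defer as ``a delicate planar-embedding argument'' is precisely the content of the proposition, and your corner-counting formulation makes it harder than necessary: the external edges split height-$0$ corners of $\hat\tau^*$ when they are inserted, so ``the number of height-$0$ corners lying in the arc of $c_i$'' is not well defined without a half-open convention, and the identity $k_1+\cdots+k_n=j$ does not come for free.

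The paper sidesteps this by counting not the $j$ height-$0$ corners but the $j$ maximal boundary paths $p_1,\dots,p_j$ running between consecutive height-$0$ corners. Each $p_t$ contains an internal vertex of negative height (by the non-adjacency observation above), so $p_t$ cannot lie on the outer face of $\Gamma$ and must be absorbed into one of the $n$ inner corners of $v$; this is exactly the justification missing from your parenthetical remark that outer corners of $v$ receive only ``trivial arcs''. Conversely, any assignment of the $j$ paths, taken in their cyclic order anchored at the marked edge, to the inner corners $c_1,\dots,c_n$ yields a valid expansion and determines it uniquely, giving exactly $\binom{n+j-1}{n-1}$ expansions per map $\tau'$. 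You need to supply this argument (or an equivalent one) rather than flag it as an obstacle. Finally, the case $n=0$ requires separate treatment, since $\Omega_z(z^0)=1$ is defined by fiat rather than by the binomial sum; there one shows directly that $\hat\tau$ must be the single-vertex map.
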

\begin{proof}
Since $(\Gamma,\hat{\tau})$ is a lower expansion of $(\Gamma',v)$, and $v$ is a vertex at height 0, all outer vertices of $\hat{\tau}$ are contained in some $(\leq0)$-component $\tau$, so these vertices must have non-positive height. If $n=0$, then the inner degree of $v$ is 0, so all outer vertices of $\hat{\tau}$ are also outer vertices of $\Gamma$. But since $\Gamma$ is an $N^{*}$-map, all of its outer vertices have non-negative height. Hence the outer vertices of $\hat{\tau}$ must all have height 0, which is only possible if $\hat{\tau}$ is the graph with only one vertex. Hence in this case $M_{v}(x)=1=\Omega_{z}(z^0)$.

Now we will consider the case when $n\geq1$. First, highlight one of the edges in $\Gamma'$ which is incident on $v$. Since the outer vertices of $\hat{\tau}$ all have non-positive heights, we can obtain an $N^{*}$-map $\tau'$ (without contracted vertices) by changing each height $s$ in $\hat{\tau}$ to $-s$, using the convention that the root vertex $v_{0}$ of $\hat{\tau}$ is the vertex adjacent to the image of the highlighted edge in $\Gamma$. Recall that these $N^{*}$-maps are enumerated by $F(x,c)$. Consider a specific $N^*$-map $\tau'$, which contributes $x^kc^j$ to $F(x,c)$. Then the corresponding map $\hat{\tau}$ contains $k$ edges and around the outer face there are $j$ corners at height 0. We will calculate the contribution of this map $\hat{\tau}$ to $M_{v}(x)$. Clearly any specific lower expansion $(\Gamma,\hat{\tau})$ contributes $x^k$ to $M_{v}(x)$, so we just need to calculate the number of lower expansions $(\Gamma,\hat{\tau})$ of $(\Gamma',v)$. Going clockwise around the outer face of $\hat{\tau}$, starting at $v_{0}$, let $p_{1},p_{2},\ldots,p_{j}$ be the paths between vertices at height 0, so these partition the boundary of $\hat{\tau}$. Now let $c_{1},c_{2},\ldots,c_{n}$ be the inner corners around $v$ in $\Gamma'$, in clockwise order starting from the highlighted edge. Then in the lower expansion, each inner corner $c_{i}$ expands to contain a number $a_{i}$ of the paths $p_{1},\ldots,p_{j}$. Since the vertices in $\hat{\tau}$ which are not at height 0 have negative heights, each path $p_{t}$ must not be on the outside of $\Gamma$, so $p_{t}$ must be counted by one of the terms $a_{i}$. Moreover, due to the clockwise order, the lower expansion is uniquely determined by the sequence $a_{1},\ldots,a_{n}$, the only restrictions on this sequence being that each term $a_{i}$ is a non-negative integer and the sum of the terms is $j$. The number of such sequences is
\[{n+j-1\choose n-1}.\]
Hence the contribution of the $N^*$-map $\tau'$ to $M_{v}(x)$ is
\[{n+j-1\choose n-1}x^k.\]
Summing this over all $N^*$-maps gives the desired result:
\[M_{v}(x)=\sum_{j=0}^{\infty}{n+j-1\choose n-1}[c^{j}]F(x,c)=\Omega_{z}(z^n).\]
\end{proof}

\begin{prop}Let $\Gamma'$ be an $N^*$-map with an upper contracted vertex $v$ at height $1$ and let $n$ be the degree of $v$. Then the generating function $M_{v}(x)$ for upper expansions $(\Gamma,\hat{\tau})$ of $(\Gamma',v)$, counted by edges in $\hat{\tau}$ is given by $M_{v}(x)=\Omega_{z}(z^n)$.\end{prop}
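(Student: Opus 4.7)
The plan is to mirror the preceding proposition's proof, exploiting the symmetry between upper expansions around a height-$1$ vertex and lower expansions around a height-$0$ vertex. The first step is to observe that for any upper expansion $(\Gamma,\hat{\tau})$ of $(\Gamma',v)$, every outer vertex of $\hat{\tau}$ lies at height at least $1$: indeed, the outer vertices of $\hat{\tau}$ are precisely the outer vertices of the $(\geq 1)$-component $\tau$ that generates it, and every vertex of $\tau$ has height at least $1$ by definition of $\Sigma_{1}(\Gamma)$. Subtracting $1$ from every height in $\hat{\tau}$ then produces a map $\tau'$ whose outer corners all have non-negative height. I would verify that $\tau'$ satisfies the remaining $N^{*}$-map conditions (it has no contracted vertices by the definition of an upper expansion) and is therefore one of the maps counted by $F(x,c)$. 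To root $\tau'$, I would distinguish a specific edge $e$ of $\Gamma'$ incident on $v$, exactly as in the preceding proposition, and take the root of $\tau'$ to be the endpoint in $\hat{\tau}$ of the edge of $\Gamma$ that corresponds to $e$.

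The next step is the counting. Suppose $\tau'$ contributes $x^{k}c^{j}$ to $F(x,c)$. Walking clockwise around the outer face of $\hat{\tau}$ from the root, the $j$ outer corners at height $1$ (which become the height-$0$ corners in $\tau'$) partition the outer boundary into $j$ paths $p_{1},\ldots,p_{j}$. An upper expansion with inserted component $\hat{\tau}$ is uniquely determined by a composition $(a_{1},\ldots,a_{n})$ of $j$ into $n$ non-negative parts, recording how many consecutive paths are placed in each of the $n$ corners $c_{1},\ldots,c_{n}$ of $v$ in $\Gamma'$, listed clockwise from $e$. The number of such compositions is $\binom{n+j-1}{n-1}$, so summing over all $\tau'$ yields
\[M_{v}(x)=\sum_{j=0}^{\infty}\binom{n+j-1}{n-1}[c^{j}]F(x,c)=\Omega_{z}(z^{n}).\]
The degenerate case $n=0$ must be handled separately: then $v$ is isolated in $\Gamma'$, forcing $\hat{\tau}$ to be the single-vertex graph and $M_{v}(x)=1=\Omega_{z}(z^{0})$ by the defining convention of $\Omega_{z}$.

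The hardest part will be verifying the dual bijection carefully, because (in contrast with the preceding proposition, where a lower-contracted vertex at height $0$ could not lie on the outer face) here the vertex $v$ at height $1$ may well lie on the outer face of $\Gamma'$. I would need to confirm that when some of the $n$ corners of $v$ are outer corners, each composition $(a_{1},\ldots,a_{n})$ still parametrises a valid upper expansion whose outer face continues to satisfy the non-negativity condition required of an $N^{*}$-map. Since every outer vertex of $\hat{\tau}$ has height at least $1$, any path $p_{t}$ inserted into an outer corner of $v$ contributes only vertices of non-negative height to the outer face of $\Gamma$, so this check should go through, and the remainder of the argument is a direct translation of the proof of the preceding proposition.
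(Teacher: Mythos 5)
Your proof is correct and follows essentially the same route as the paper, which simply declares the argument identical to that of the preceding proposition except that $c_{1},\ldots,c_{n}$ is the list of \emph{all} corners around $v$ and $\tau'$ is obtained by subtracting $1$ from each height; you carry out that translation in full and correctly verify the one genuinely new point, namely that paths may now be placed in outer corners of $v$ because every outer vertex of $\hat{\tau}$ has height at least $1$. (Your parenthetical claim that the lower-contracted vertex of the preceding proposition cannot lie on the outer face is not quite accurate---it can, which is precisely why only \emph{inner} corners could receive paths there---but this aside does not affect the validity of your argument here.)
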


\begin{proof}The proof is identical to the one above except that $c_{1},\ldots,c_{n}$ is the list of all corners around $v$, and the $N^{*}$-map $\tau'$ is constructed by subtracting 1 from each height in $\hat{\tau}$.\end{proof}

\begin{prop} The generating function $R$ is given by the equation
\[R(x,a,b)=abx+\frac{1}{xab}R(x,a,b)S(x,a,b).\]
\end{prop}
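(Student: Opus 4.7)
My plan is to prove the identity by a combinatorial decomposition of $R$-maps. The term $abx$ on the right-hand side accounts for the atomic $R$-map consisting of just the root edge (one edge, $v_0$ and $v_1$ each of degree $1$). The remaining non-atomic $R$-maps will be placed in bijection with pairs $(\Gamma_1,\Gamma_2)$, where $\Gamma_1$ is any $R$-map and $\Gamma_2$ is any $S$-map, glued along a single shared edge. The prefactor $\frac{1}{xab}$ is the correction for that shared edge, which is counted once in $\Gamma$ but contributes to the $x$-, $a$-, and $b$-weights of both $\Gamma_1$ and $\Gamma_2$.

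To set up the bijection, I would start with a non-atomic $R$-map $\Gamma$ with root edge $e$ from $v_0$ to $v_1$; its outer face has degree $2$, bounded by $e$ and a second edge $f^{*}\neq e$ from $v_1$ back to $v_0$. Going clockwise around $v_0$ starting from $e$, let $e_1$ be the first subsequent edge whose other endpoint is $v_1$; this is well-defined since $f^{*}$ itself qualifies. The edges $e$ and $e_1$ bound a $2$-gon enclosing a region $D$ on the side opposite to the outer face of $\Gamma$. I then define $\Gamma_2$ to consist of $v_0$, $v_1$, the edges $e$ and $e_1$, and everything inside $D$ (with root edge $e$), and $\Gamma_1$ to be the complementary sub-map containing $v_0$, $v_1$, $e_1$, $f^{*}$, and everything outside $D$ (with root edge $e_1$), so that $e_1$ is shared between the two. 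The inverse takes a pair $(\Gamma_1,\Gamma_2)$ and identifies the root edge of $\Gamma_1$ with the non-root outer edge of $\Gamma_2$, embedding $\Gamma_2$ so that its outer face replaces the outer face of $\Gamma_1$; the new outer face of the composite $R$-map is then bounded by the root of $\Gamma_2$ and by $f^{*}$ of $\Gamma_1$.

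The verifications I need are: that $\Gamma_2$ is an $S$-map, since its outer face has degree $2$, $v_0$ and $v_1$ are its only contracted vertices (inherited from $\Gamma$), and there are exactly two edges between $v_0$ and $v_1$, because any further such edge would lie inside $D$ and hence emanate from $v_0$ strictly between $e$ and $e_1$ in clockwise order, contradicting the minimality of $e_1$; that $\Gamma_1$ is an $R$-map with outer face of degree $2$ bounded by $e_1$ and $f^{*}$; and that the weight bookkeeping yields $\frac{1}{xab}R(x,a,b)S(x,a,b)$, because the shared edge $e_1$ contributes $x$, $a$, and $b$ to the weights of each of $\Gamma_1$ and $\Gamma_2$ but only once to that of $\Gamma$. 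The hardest part will be carefully justifying the planar-embedding details: in particular, that $D$ is well-defined with boundary exactly $\{e,e_1\}$ in $\Gamma$, and that the inverse gluing always produces a valid planar $R$-map with outer face of degree $2$.
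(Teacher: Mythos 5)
Your proposal is correct and follows essentially the same decomposition as the paper: the single-edge map gives $abx$, and otherwise the first edge $e_1$ clockwise from $e$ around $v_0$ that reaches $v_1$ splits $\Gamma$ into an $S$-map (the 2-gon on $e$ and $e_1$ with its interior) and an $R$-map (the rest), with the shared edge $e_1$ double-counted in $x$, $a$ and $b$, giving the factor $\frac{1}{xab}$. The extra verification you flag (that minimality of $e_1$ forces exactly two $v_0$--$v_1$ edges in the inner piece, so it is counted by $S$ and not merely $R$) is exactly the point the paper leaves implicit.
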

\begin{proof}Let $\Gamma$ be an $N^{*}$-map which is counted by $R$. Let $v_{0}$, $v_{1}$ and $e$ be the root-0 vertex, root-1 vertex and root edge, respectively. Clearly the case where $e$ is the only edge contributes $abx$ to $R$. Otherwise, there must be at least two distinct edges between $v_{0}$ and $v_{1}$. Let $e_{1}$ be the next edge clockwise around $v_{0}$ which connects to $v_{1}$, and let $e'$ be the next edge anticlockwise around $v_{0}$ from $e$, so $e$ and $e'$ are the two edges which border the outer face of $\Gamma$. Note that $e_{1}$ and $e'$ may or may not be the same edge. Let $\Gamma_{1}$ be the map formed by $e$ and $e_{1}$ and everything contained in the cycle formed by these edges. Similarly let $\Gamma_{2}$ be the map formed by the edges $e_{1}$ and $e'$ and everything they contain. Then $\Gamma_{1}$ can be any map which is counted by $S$ and $\Gamma_{2}$ can be any map which is counted by $R$, Hence the maps $\Gamma$ are counted by the product $R(x,a,b)S(x,a,b)$. However, the edge $e_{1}$ is counted twice in the product in $a$, $b$ and $x$. Hence the contribution from this case is 
\[\frac{1}{xab}R(x,a,b)S(x,a,b).\]
Adding the contribution from both cases gives the desired result.\end{proof}

\begin{figure}~
\setlength{\captionindent}{0pt}
   \put(-207,0){\includegraphics[width=0.97\linewidth]{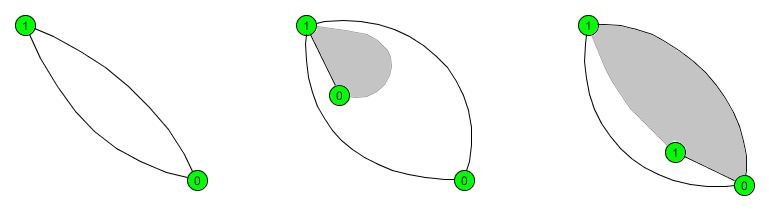}}
   \put(-165,32){$e$}
   \put(-112,1){$v_{0}$} 
   \put(-205,86){$v_{1}$} 
   \put(-24,22){$e$} 
   \put(-21,49){$u_{0}$} 
   \put(33,1){$v_{0}$} 
   \put(-56,87){$v_{1}$} 
   \put(116,28){$e$}
   \put(143,23){$u_{1}$}
   \put(182,0){$v_{0}$} 
   \put(97,87){$v_{1}$} 
   \caption{The three different cases of graphs which contribute to $S(x,a,b)$. The contributions to $S$ from the three cases are shown to be $x^2a^2b^2$, $\Omega_{z}\left(\frac{a^2}{z^2}R(x,z,b)S(x,z,b)\right)$ and $\Omega_{z}\left(R(x,a,z)b(zS(x,a,b)-bS(x,a,z))/(bz-z^2))\right)$, respectively.}
   \label{fig:Cases_for_S}
\end{figure}

\begin{prop}\label{eqnforS} The generating function $S$ is given by the equation
\[S(x,a,b)=\Omega_{z}\left(x^2a^2b^2+\frac{zS(x,a,b)-bS(x,a,z)}{z(b-z)}R(x,a,z)b+\frac{a^2}{z^2}R(x,z,b)S(x,z,b)\right).\]
\end{prop}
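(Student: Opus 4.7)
The plan is to mirror the strategy of Proposition \ref{eqnforP} and partition the $N^*$-maps $\Gamma$ counted by $S(x,a,b)$ into three mutually exclusive cases according to the local structure inside the $2$-cycle formed by the two edges $e$ and $e'$ joining $v_0$ and $v_1$, and to check that each case contributes exactly one of the three summands inside the outer $\Omega_z$. The first case is the minimal map in which $\Gamma$ consists of only $v_0, v_1, e, e'$; this contributes $x^2 a^2 b^2$, and since this term has no $z$-dependence, $\Omega_z$ acts as the identity on it.

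In the remaining two cases $\Gamma$ has interior structure, and I would perform a canonical cut along a path passing through one interior vertex $u$, producing an intermediate $N^*$-map in which $u$ is a contracted vertex at the appropriate height. The two preceding propositions on $\Omega_z$ then describe the generating function for undoing the contraction of $u$: if $u$ is upper-contracted at height $1$ with degree $n$, or lower-contracted at height $0$ with inner degree $n$, then the generating function for its expansions is $\Omega_z(z^n)$. This is precisely what the outer $\Omega_z$ on the right-hand side accomplishes, with $z$ marking the degree of $u$ in the intermediate map. The cut separates $\Gamma$ into two sub-$N^*$-maps, one counted by a specialisation of $R$ and the other by a specialisation of $S$; the convention is that the $S$-piece is the one containing exactly two edges between its analogues of $v_0$ and $v_1$.

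Case $2$ (middle subfigure of Figure \ref{fig:Cases_for_S}) handles cuts through a height-$0$ vertex $u_0$. Here the degree of the true $v_1$ is distributed across the cut, with one sub-map inheriting some edges at its copy of $v_1$ and the other inheriting the rest. Summing over all admissible splits, precisely as in the edge-duplication argument of Case $4$ of the proof of Proposition \ref{eqnforP}, yields the divided-difference
\[\frac{zS(x,a,b)-bS(x,a,z)}{z(b-z)},\]
multiplied by $R(x,a,z)\cdot b$ to account for the $R$-sub-map and the edge crossing the cut at $v_1$. Case $3$ (right subfigure) handles cuts through a height-$1$ vertex $u_1$, producing $R(x,z,b)S(x,z,b)$ with $z$ marking the degree of a duplicate of $v_0$ lying on the cut; the factor $a^2/z^2$ arises because the two outer edges $e$ and $e'$ at the true $v_0$ are each recorded in a $z$-marker of the two sub-pieces and must be reassigned to the $a$-marker of the single $v_0$ in~$\Gamma$.

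The main obstacle is to specify the canonical cut explicitly, so that the three cases genuinely partition the $S$-maps and each $\Gamma$ is assigned to a unique case without ambiguity; the likely choice is the first interior vertex of the appropriate height encountered by walking clockwise around the $2$-cycle starting from $e$, but this must be verified. Once the cut is fixed, computing the contributions (justifying the divided difference in Case $2$ and the $a^2/z^2$ factor in Case $3$) is a careful but essentially routine exercise in bookkeeping of which edges are duplicated and which vertices are marked. A secondary subtlety is verifying that the sub-map labelled as the $S$-piece really does satisfy the ``exactly two edges'' requirement built into the definition of $S$, which dictates which side of the cut carries the $R$-weight and which carries the $S$-weight.
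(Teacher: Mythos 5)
Your overall shape (three cases, an outer $\Omega_z$ undoing a contraction, a divided difference from splitting a degree) is the right one, but two things in the proposal are genuinely wrong or missing. First, the case division is not a ``canonical cut along the 2-cycle'' that remains to be found: the paper splits simply by the degrees of the root vertices --- (i) $\deg v_0=\deg v_1=2$, (ii) $\deg v_0=2$ and $\deg v_1>2$, (iii) $\deg v_0>2$ --- and in each non-trivial case the cut vertex is forced: in (iii) it is $u_1$, the far endpoint of the next edge clockwise from $e$ around $v_0$ (a height-$1$ vertex, whose $(\geq 1)$-component is upper-contracted), and in (ii) it is $u_0$, the far endpoint of the next edge anticlockwise from $e$ around $v_1$ (a height-$0$ vertex, whose $(\leq 0)$-component is lower-contracted). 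Since you explicitly flag this as the unresolved obstacle, the load-bearing step of the proof is missing. Second, you have attached the wrong term to each case. In the divided-difference term the variable $z$ sits in the $b$-slot of $S(x,a,z)$ and $R(x,a,z)$, i.e.\ it marks the degree of a \emph{height-1} vertex; that term comes from case (iii), where the degree of the merged vertex is distributed between $v_1$ and $u_1$. Conversely $\frac{a^2}{z^2}R(x,z,b)S(x,z,b)$ has $z$ in the $a$-slot, marking the height-$0$ vertex $u_0$, and comes from case (ii). Your write-up is internally inconsistent on this point: you cut at the height-$0$ vertex $u_0$ yet describe splitting the degree of the height-$1$ vertex $v_1$, and vice versa in your Case 3.

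A smaller but still substantive gap: the product $R(x,z,b)S(x,z,b)$ in the second term is not obtained by directly severing $\Gamma$ into an $R$-piece and an $S$-piece. The paper's case (ii) deletes $v_0$ and its two edges, reattaches $u_0$ to $v_1$, and shows the contribution is $\Omega_z\bigl(\tfrac{xa^2b}{z}R(x,z,b)-x^2a^2b^2\bigr)$; only then does it substitute the already-established relation $R(x,a,b)=abx+\tfrac{1}{xab}R(x,a,b)S(x,a,b)$ to reach the stated form. Your blueprint of ``one sub-map counted by $R$, the other by $S$, with the $S$-piece being the one with exactly two root edges'' would therefore need a separate justification (essentially re-deriving the $R$-equation inside this proof), and as written it does not explain the subtraction of the single-edge map that produces the $-x^2a^2b^2$ correction.
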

\begin{proof}Let $\Gamma$ be an $N^{*}$ map which is counted by $S$. Let $v_{0}$, $v_{1}$ and $e$ be the root-0 vertex, root-1 vertex and root edge of $\Gamma$. Let $e'$ be the other edge between $v_{0}$ and $v_{1}$. In the case where vertices $v_{0}$ and $v_{1}$ both have degree 2, the edges $e$ and $e'$ must be the only edges in the graph. Hence, this case contributes $x^2a^2b^2$ to $S(x,a,b)$. Next we will consider the case where $v_{0}$ has degree 2 but $v_{1}$ has degree greater than 2. Let $e_{1}$ be the next edge anticlockwise from $e$ around $v_{1}$, and let $u_{0}$ be the other vertex on edge $e_{1}$. Let $\tau$ be the $(\leq 0)$-component containing $u_{0}$ and let $(\Gamma',u_{0})$ be the lower contraction of $(\Gamma,\hat{\tau})$. Finally, let $\Gamma_{R}$ be the map formed from $\Gamma'$ by removing $v_{0}$ and the two edges attached to it, and adding a new edge $e_{2}$ between $u_{0}$ and $v_{1}$ so that $e_{1}$ and $e_{2}$ are the only edges on the outer face of $\Gamma_{R}$. Since $u_{0}$ and $v_{1}$ are contracted vertices in $\Gamma_{R}$, and the outer face has degree $2$, $\Gamma_{R}$ is counted by $R$. Since there are at least two edges between $u_{0}$ and $v_{1}$, this map cannot contain only a single edge. However, for any other map $\Gamma_{R}$ counted by $R$, the transformations between $\Gamma$ and $\Gamma_{R}$ can be reversed, so $\Gamma_{R}$ can be any other map counted by $R$. Hence, the possible maps $\Gamma_{R}$ are counted by
\[R(x,z,b)-xzb,\]
where $x$ counts the edges, $z$ counts the degree of $u_{0}$ and $b$ counts the degree of $v_{1}$. Since the transformation from $\Gamma_{R}$ to $\Gamma'$ just removes one edge between $u_{0}$ and $v_{1}$, and adds two between $v_{0}$ and $v_{1}$, the possible maps $\Gamma'$ are counted by
\[\frac{xa^2b}{z}(R(x,z,b)-xzb)=\frac{xa^2b}{z}R(x,z,b)-x^2a^2b^2.\]
Since $(\Gamma,\hat{\tau_{1}})$ can be any lower expansion of $(\Gamma',u_{0})$, the contribution to $S(x,a,b)$ from this case is
\[\Omega_{z}\left(\frac{xa^2b}{z}R(x,z,b)-x^2a^2b^2\right).\]
Using the previous Proposition, we can rewrite this as
\[\Omega_{z}\left(\frac{a^2}{z^2}R(x,z,b)S(x,z,b)\right).\]

\begin{figure}~
\setlength{\captionindent}{0pt}
   \put(-207,0){\includegraphics[width=0.97\linewidth]{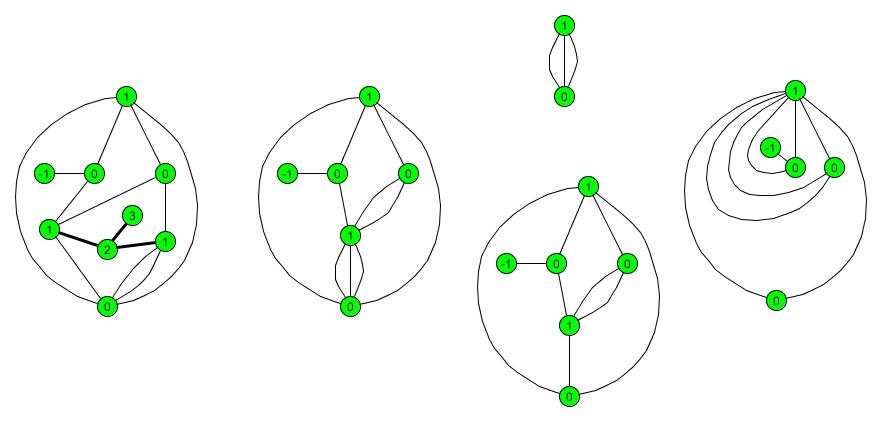}}
   \put(-190,44){\Large$\Gamma$}
   \put(-165,41){$v_{0}$}
   \put(-190,78){$u_{1}$}
   \put(-178,64){$e_{0}$}
   \put(-208,97){$e$}
   \put(-113,97){$e'$}
   \put(-156,160){$v_{1}$}
   \put(-80,44){\Large$\Gamma'$}
   \put(-50,41){$v_{0}$}
   \put(-58,82){$u_{1}$}
   \put(-61,68){$e_{0}$}
   \put(-93,97){$e$}
   \put(2,97){$e'$}
   \put(-41,160){$v_{1}$}
   \put(20,4){\Large$\Gamma''$}
   \put(51,0){$v_{0}$}
   \put(45,38){$u_{1}$}
   \put(50,25){$e_{0}$}
   \put(10,53){$e$}
   \put(105,53){$e'$}
   \put(62,118){$v_{1}$}
   \put(30,150){\Large$\Gamma_{R}$}
   \put(120,44){\Large$\Gamma_{S}$}
   \put(150,44){$v_{0}$}
   \put(108,100){$e$}
   \put(203,100){$e'$}
   \put(154,162){$v_{1}~(=u_{1})$}
   \caption{On the left is an example of a graph $\Gamma$ counted in the third case of Proposition \ref{eqnforS}, with the $(\geq 1)$-component $\tau$ highlighted. The other graphs shown are $\Gamma'$, $\Gamma''$, $\Gamma_{R}$ and $\Gamma_{S}$, which are involved in the decomposition of $\Gamma$. The contracted vertices are coloured blue and all other vertices are coloured green.}
   \label{fig:Last_case_in_S_proof}
\end{figure}
Finally, we will consider the case where $v_{0}$ has degree greater than 2. Let $e_{0}$ be the next edge clockwise from $e$ around $v_{0}$ and let $u_{1}$ be the other vertex connected to $e_{0}$. Let $\tau$ be the $(\geq1)$-component containing $u_{1}$, and let $(\Gamma',u_{1})$ be the upper contraction of $(\Gamma,\hat{\tau})$. Let $\Gamma_{R}$ be the map formed by all edges between $v_{0}$ and $u_{1}$ in $\Gamma'$ along with everything contained in the inner faces of these edges. Let $\Gamma''$ be the map formed from $\Gamma'$ by replacing all of $\Gamma_{R}$ with a single edge. Now let $\Gamma_{S}$ be the graph formed from $\Gamma''$ by deleting the edge $e_{0}$ and identifying $u_{1}$ with $v_{1}$ (this vertex in $\Gamma_{S}$ will be called $v_{1}$). In $\Gamma_{S}$, the edges $e$ and $e'$ still form the outer face and they are the only two edges between $v_{0}$ and $v_{1}$. Hence, $\Gamma_{S}$ is counted by the generating function $S$. Assume that $\Gamma_{S}$ contributes $x^na^md^k$ to $S(x,a,d)$. So $\Gamma_{S}$ has $n$ edges, and the degress of $v_{0}$ and $v_{1}$ in $\Gamma_{S}$ are $m$ and $k$, respectively. By analysing the transformation from $\Gamma''$ to $\Gamma_{S}$, we can see that in $\Gamma''$, the sum of the degrees of $u_{1}$ and $v_{1}$ is $k+1$, the number of edges is $n+1$ and the degree of $v_{0}$ is $m+1$. The degree of $v_{1}$ must be at least 2, and the degree of $u_{1}$ must be at least 1, but subject to these restrictions, there is exactly one map $\Gamma''$ for each choice of degrees of $v_{1}$ and $u_{1}$. Hence, the possible graphs $\Gamma''$ are counted by
\[x^{n+1}a^{m+1}(b^2z^{k-1}+b^3z^{k-2}+\ldots+b^kz)=x^{n+1}a^{m+1}\frac{b^{k+1}z-b^2z^k}{b-z},\]
where $b$ counts the degree of $v_{1}$ and $z$ counts the degree of $u_{1}$. Now, since the possible maps $\Gamma_{R}$ are counted by $R(x,a,z)$, and $\Gamma'$ is formed by combining any map $\Gamma''$ with any map $\Gamma_{R}$, while removing one edge between $v_{0}$ and $u_{1}$, the possible maps $\Gamma'$ are counted by
\[\frac{1}{xaz}x^{n+1}a^{m+1}\frac{b^{k+1}z-b^2z^k}{b-z}R(x,a,z)=x^{n}a^{m}\frac{b^{k+1}-b^2z^{k-1}}{b-z}R(x,a,z).\]
Then, since $(\Gamma,\hat{\tau})$ can be any upper expansion of $(\Gamma',u_{1})$, The possible graphs $\Gamma$ are counted by
\[\Omega_{z}\left(x^{n}a^{m}\frac{b^{k+1}-b^2z^{k-1}}{b-z}R(x,a,z)\right).\]
Summing this over all possible graphs $\Gamma_{S}$ gives the contribution from this case
\[\Omega_{z}\left(\frac{bS(x,a,b)-\frac{b^2}{z}S(x,a,z)}{b-z}R(x,a,z)\right).\]
Finally, adding the contributions from all three cases gives the desired result.
\end{proof}

\begin{prop} The generating function $H$ is given by the equation
\[H(x,b,c)=\Omega_{z}\left(\frac{1}{xbz}T(x,z,b,c)R(x,z,b)\right)+1.\]
\end{prop}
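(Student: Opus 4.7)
My plan has two stages. The $+1$ on the right-hand side corresponds to the atomic map whose only vertex is $v_1$, which contributes $1$ to $H(x,b,c)$. For a non-atomic $N^{*}$-map $\Gamma$ counted by $H(x,b,c)$, I first let $\tau$ be the $(\leq 0)$-component of $\Gamma$ containing the root-0 vertex $v_{0}$ and take $(\Gamma',v_{0})$ to be the lower contraction of $(\Gamma,\hat{\tau})$. Then $\Gamma'$ is an $N^{*}$-map whose only contracted vertices are $v_{0}$ (lower contracted at height $0$) and $v_{1}$ (upper contracted at height $1$), joined by the root edge. By the earlier proposition on lower expansions at height $0$, the contribution to $H$ from all lower expansions $(\Gamma,\hat{\tau})$ of a given $(\Gamma',v_{0})$ with $v_{0}$ of inner degree $n$ is precisely $\Omega_{z}(z^{n})$, so it suffices to show that the generating function for such base maps $\Gamma'$ (tracking edges by $x$, degree of $v_{1}$ by $b$, inner degree of $v_{0}$ by $z$, and height-$0$ outer corners by $c$) equals $\frac{1}{xbz}T(x,z,b,c)R(x,z,b)$.

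For this, I claim $\Gamma'$ admits a unique ``$T$-$R$ decomposition'' into a $T$-map $\Gamma_{T}$ and an $R$-map $\Gamma_{R}$, glued along the root edge. Intuitively $\Gamma_{R}$ collects all edges of $\Gamma'$ between $v_{0}$ and $v_{1}$ together with the submaps they enclose (a ``bigon chain''), with its outer bigon formed by the root edge and the $v_{0}$-$v_{1}$ edge immediately anticlockwise from it around $v_{0}$, while $\Gamma_{T}$ is the map obtained by collapsing $\Gamma_{R}$ down to a single edge. Conversely, given any $T$-map and $R$-map one can reassemble $\Gamma'$ by placing $\Gamma_{R}$ into the inner face of $\Gamma_{T}$ adjacent to the root edge and identifying the two root edges. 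This furnishes a bijection between base maps $\Gamma'$ and pairs $(\Gamma_{T},\Gamma_{R})$.

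The bookkeeping is then straightforward: the root edge is shared, so the edge count, the degree of $v_{1}$, and the $z$-weight at $v_{0}$ each need to be divided by one factor, producing the correction $\frac{1}{xbz}$. The degree of $v_{0}$ in $\Gamma_{R}$ minus one (for the shared root edge) plus the inner degree of $v_{0}$ in $\Gamma_{T}$ exactly equals the inner degree of $v_{0}$ in $\Gamma'$, which is why dividing the $T\cdot R$ product by $z$ yields the correct $z$-exponent $n$ for the subsequent application of $\Omega_{z}$. The outer face of $\Gamma'$ coincides with that of $\Gamma_{T}$, so the height-$0$ outer corner count $c$ is tracked only by $T$. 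Assembling these pieces produces the stated formula $H(x,b,c)=\Omega_{z}\!\left(\frac{1}{xbz}T(x,z,b,c)R(x,z,b)\right)+1$.

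The main obstacle is verifying the $T$-$R$ decomposition rigorously: pinning down the correct rule for which edge plays the role of the second outer edge of $\Gamma_{R}$'s bigon when there are several $v_{0}$-$v_{1}$ edges, checking that the resulting $\Gamma_{T}$ really has the root edge as its unique $v_{0}$-$v_{1}$ edge (so it qualifies as a $T$-map), and confirming that the construction is reversible and bijective. Once this combinatorial step is settled, the generating function identity follows immediately from the lower-expansion proposition.
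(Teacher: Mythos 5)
Your proposal is correct and follows essentially the same route as the paper: lower-contract the $(\leq 0)$-component at $v_{0}$, split the contracted map $\Gamma'$ into the $R$-part (all $v_{0}$--$v_{1}$ edges together with everything they enclose) and the $T$-part obtained by collapsing it to a single edge, divide by $xbz$ for the shared edge, and apply $\Omega_{z}$ via the lower-expansion proposition, with the $+1$ for the atomic map. The paper likewise asserts the $T$--$R$ decomposition and its reversibility without the detailed verification you flag as the remaining obstacle.
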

\begin{proof}Let $\Gamma$ be an $N^*$-map which is counted by $H$ such that $\Gamma$ is not just a single vertex. Let $v_{0}$, $v_{1}$ and $e$ be the root-0 vertex, root-1 vertex and root edge of $\Gamma$, respectively. Let $\tau$ be the $(\leq0)$-component containing $v_{0}$, and let $(\Gamma',v_{0})$ be the lower contraction of $(\Gamma,\hat{\tau})$. Let $\Gamma_{R}$ be the map formed by all edges between $v_{0}$ and $v_{1}$ in $\Gamma'$ along with everything contained in inner faces of these edges. Then the outer face of $\Gamma_{R}$ has degree 2, so the possible maps $\Gamma_{R}$ are counted by $R$. Let $\Gamma_{T}$ be the map formed from $\Gamma'$ by replacing all of $\Gamma_{R}$ with a single edge. In $\Gamma_{T}$, there is only one edge between $v_{0}$ and $v_{1}$, so $\Gamma_{T}$ is counted by the generating function $T$. Assume that $\Gamma_{R}$ contributes $x^{n}z^mb^k$ to $R(x,z,b)$. Then the transformation from $\Gamma'$ to $\Gamma_{T}$ decreases the number of edges by $n-1$, the degree of $v_{1}$ by $k-1$ and the inner degree of $v_{0}$ by $m-1$. Hence, if we let $z$ count the inner degree of $v_{0}$ in $\Gamma'$, then the possible maps $\Gamma'$ are counted by
\[x^{n-1}z^{m-1}b^{k-1}T(x,z,b,c).\]
Then, since $\Gamma$ can be any lower expansion of $(\Gamma',v_{0})$, the possible maps $\Gamma$ are counted by
\[\Omega_{z}(x^{n-1}z^{m-1}b^{k-1}T(x,z,b,c)).\]
Summing over all possible maps $\Gamma_{R}$ gives the contribution
\[\Omega_{z}\left(\frac{1}{xbz}T(x,z,b,c)R(x,z,b)\right).\]
Finally, adding 1 for the case when $\Gamma$ is a single vertex gives the desired result.\end{proof}

\begin{prop} The generating function $M$ is given by the equation
\[M(x,a,c)=\Omega_{z}\left(\frac{1}{xaz}T(x,a,z,c)R(x,a,z)\right)+1.\]
\end{prop}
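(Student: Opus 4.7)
The plan is to mirror the proof of the preceding proposition for $H$, interchanging the roles of the two labelled vertices and swapping lower expansions for upper expansions. The trivial case in which $\Gamma$ consists solely of the root-$0$ vertex contributes the additive $+1$. For any other $N^{*}$-map $\Gamma$ counted by $M$, let $v_{0}$, $v_{1}$ and $e$ denote the root-$0$ vertex, root-$1$ vertex and root edge; let $\tau$ be the $(\geq 1)$-component containing $v_{1}$, and let $(\Gamma', v_{1})$ be the upper contraction of $(\Gamma, \hat{\tau})$. Then in $\Gamma'$ the vertex $v_{1}$ is upper contracted at height $1$ while $v_{0}$ remains the only other contracted vertex.

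Next I decompose $\Gamma'$ exactly as in the $H$ argument. Let $\Gamma_{R}$ be the submap consisting of all edges between $v_{0}$ and $v_{1}$ in $\Gamma'$ together with everything contained in their inner faces; its outer face has degree $2$ and both $v_{0},v_{1}$ are contracted, so $\Gamma_{R}$ is counted by $R(x,a,z)$. Let $\Gamma_{T}$ be the map obtained from $\Gamma'$ by replacing all of $\Gamma_{R}$ with a single edge (the root edge) between $v_{0}$ and $v_{1}$; since $\Gamma_{T}$ then has only the root edge between the two contracted vertices, it is counted by $T(x,a,z,c)$. The height-$0$ corner count $c$ is preserved throughout, because the upper contraction affects only vertices of height $\geq 1$ and the substitution of $\Gamma_{R}$ by a single edge happens in the interior of the map. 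Writing the contribution of $\Gamma_{R}$ to $R(x,a,z)$ as $x^{n}a^{m}z^{k}$, reinserting $\Gamma_{R}$ in place of the representative root edge of $\Gamma_{T}$ adds $n-1$ edges, increases the inner degree of $v_{0}$ by $m-1$, and increases the degree of $v_{1}$ by $k-1$. Summing over the independent choices of $\Gamma_{T}$ and $\Gamma_{R}$ therefore gives the generating function
\[\frac{1}{xaz}\,T(x,a,z,c)\,R(x,a,z)\]
for $\Gamma'$, the prefactor correcting the triple overcount of the shared root edge.

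Finally, by the earlier proposition on upper expansions about an upper-contracted vertex at height $1$ whose degree is recorded by the exponent of $z$, applying the operator $\Omega_{z}$ correctly sums over all upper expansions $(\Gamma,\hat{\tau})$ of $(\Gamma', v_{1})$. Adding back $1$ for the atomic case yields the claimed identity. The one delicate bookkeeping point, parallel to that in the $H$-case, is verifying that the $a$-exponent in the product $\frac{1}{xaz}T(x,a,z,c)R(x,a,z)$ really does equal the inner degree of $v_{0}$ in $\Gamma'$: the $a$-exponent from $R$ records the full degree of $v_{0}$ in $\Gamma_{R}$, while that from $T$ records its inner degree in $\Gamma_{T}$; since $\Gamma_{R}$ is enclosed inside one inner face of $\Gamma_{T}$, every $v_{0}$-edge of $\Gamma_{R}$ contributes an inner corner at $v_{0}$ in $\Gamma'$, and the $a^{-1}$ factor exactly cancels the single inner corner contributed by the representative root edge in $\Gamma_{T}$. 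This is the main step requiring verification; once it is in hand, the stated equation follows immediately.
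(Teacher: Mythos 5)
Your proof is correct and follows essentially the same decomposition as the paper's: contract the $(\geq 1)$-component at $v_{1}$, split $\Gamma'$ into $\Gamma_{R}$ (counted by $R$) and $\Gamma_{T}$ (counted by $T$), divide by $xaz$ for the shared edge, apply $\Omega_{z}$, and add $1$ for the atomic map. The only quibble is in your closing remark: $\Gamma_{R}$ is not enclosed in an inner face of $\Gamma_{T}$ (the root edge borders the outer face), and the cleaner accounting is that the $m$ edges of $\Gamma_{R}$ at $v_{0}$ create exactly $m-1$ new inner corners between consecutive edges, yielding the same factor $a^{m-1}$.
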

\begin{proof}Let $\Gamma$ be an $N^*$-map which is counted by $M$ such that $\Gamma$ is not just a single vertex. Let $v_{0}$, $v_{1}$ and $e$ be the root-0 vertex, root-1 vertex and root edge of $\Gamma$, respectively. Let $\tau$ be the $(\geq1)$-component containing $v_{1}$, and let $(\Gamma',v_{1})$ be the upper contraction of $(\Gamma,\hat{\tau})$. Let $\Gamma_{R}$ be the map formed by all edges between $v_{0}$ and $v_{1}$ in $\Gamma'$ along with everything contained in inner faces of these edges. Then the outer face of $\Gamma_{R}$ has degree 2, so the possible maps $\Gamma_{R}$ are counted by $R$. Let $\Gamma_{T}$ be the map formed from $\Gamma'$ by replacing all of $\Gamma_{R}$ with a single edge. In $\Gamma_{T}$, there is only one edge between $v_{0}$ and $v_{1}$, so $\Gamma_{T}$ is counted by the generating function $T$. Assume that $\Gamma_{R}$ contributes $x^{n}a^mz^k$ to $R(x,a,z)$. Then the transformation from $\Gamma'$ to $\Gamma_{T}$ decreases the number of edges by $n-1$, the degree of $v_{1}$ by $k-1$ and the inner degree of $v_{0}$ by $m-1$. Hence, if we let $z$ count the degree of $v_{1}$ in $\Gamma'$, then the possible maps $\Gamma'$ are counted by
\[x^{n-1}a^{m-1}z^{k-1}T(x,a,z,c).\]
Then, since $\Gamma$ can be any upper expansion of $(\Gamma',v_{1})$, the possible maps $\Gamma$ are counted by
\[\Omega_{z}(x^{n-1}a^{m-1}z^{k-1}T(x,a,z,c)).\]
Summing over all possible maps $\Gamma_{R}$ gives the contribution
\[\Omega_{z}\left(\frac{1}{xaz}T(x,a,z,c)R(x,a,z)\right).\]
Finally, adding 1 for the case when $\Gamma$ is a single vertex gives the desired result.\end{proof}

\begin{prop} The generating function $F$ is given by the equation
\[F(x,c)=\Omega_{z}(H(x,z,c)).\]
\end{prop}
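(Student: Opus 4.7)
The plan is to prove $F(x,c)=\Omega_z(H(x,z,c))$ by a bijective decomposition of $N^*$-maps counted by $F$ according to the $(\geq 1)$-component containing the root-$1$ vertex, followed by an application of the upper expansion proposition just established. Given a non-atomic $N^*$-map $\Gamma$ counted by $F$, it has a root-$1$ vertex $v_{1}$ at height $1$; let $\tau$ be the $(\geq 1)$-component containing $v_{1}$ and form the upper contraction $(\Gamma',v_{1})$ of $(\Gamma,\hat{\tau})$. Since $\Gamma$ has no contracted vertices, $\Gamma'$ has $v_{1}$ as its only contracted vertex, and since the boundary edges of $\hat{\tau}$ join height-$1$ vertices (in $\tau$) to height-$0$ vertices (outside $\hat{\tau}$), the new vertex $v_{1}$ in $\Gamma'$ has all of its neighbors at height $0$ and is therefore upper contracted at height $1$. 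Thus $\Gamma'$ is counted by $H(x,b,c)$, with $b$ recording the degree of $v_{1}$. This construction is reversible: any upper expansion of $(\Gamma',v_{1})$ for $\Gamma'$ counted by $H$ produces an $N^*$-map with no contracted vertices, hence one counted by $F$.

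Next I verify that the weights correspond correctly. The edges of $\Gamma$ decompose as the edges of $\Gamma'$ together with the edges internal to $\hat{\tau}$, giving the factorisation of the $x$-weight. For the $c$-weight, the outer corners of $\Gamma$ at height $0$ all lie at vertices outside $\hat{\tau}$: any height-$0$ vertex inside $\hat{\tau}$ would be enclosed within an inner face of $\tau$ and therefore could not lie on the outer face of $\Gamma$. These corners are in bijection with the height-$0$ outer corners of $\Gamma'$. Hence, if $\Gamma'$ contributes $x^{k}b^{n}c^{j}$ to $H$, the total contribution from all upper expansions of $(\Gamma',v_{1})$ to $F(x,c)$ is $x^{k}c^{j}$ times the generating function for upper expansions at $v_{1}$, counted by edges in $\hat{\tau}$.

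To conclude, I invoke the preceding proposition for upper contracted vertices at height $1$ of degree $n$, which identifies this generating function with $\Omega_{z}(z^{n})$. Summing over all non-atomic $\Gamma'$ counted by $H$ and using the linearity of $\Omega_{z}$, the contribution to $F$ is $\Omega_{z}(H(x,z,c)-1)$; adding the atomic single-vertex map in $F$, which matches the constant term $1$ in $H$ via $\Omega_{z}(z^{0})=1$, produces the desired identity $F(x,c)=\Omega_{z}(H(x,z,c))$. The only routine point to verify is that the upper contraction of a non-atomic $F$-map indeed yields a valid $N^*$-map with $v_{1}$ as the unique contracted vertex, but this follows directly from the structural properties of the $(\geq 1)$-component $\tau$ noted above.
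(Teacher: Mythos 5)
Your proof is correct and takes essentially the same approach as the paper: contract the $(\geq 1)$-component containing the root-$1$ vertex to obtain a map counted by $H$, then invoke the upper-expansion proposition to identify the expansions with $\Omega_{z}(z^{n})$ and sum to get $\Omega_{z}(H(x,z,c))$. Your write-up is in fact more careful than the paper's three-line proof, which (apparently by a slip) speaks of a \emph{lower} expansion at $v_{0}$ where the stated equation, with $z$ marking the degree of $v_{1}$, requires the upper expansion at $v_{1}$ that you use.
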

\begin{proof}Let $\Gamma$ be an $N^{*}$ map which is counted by $H$, and let $v_{0}$ be the root-0 vertex of $\Gamma$. Let $(\Gamma_{F},\hat{\tau})$ be any lower expansion of $(\Gamma,v_{0})$. Then the possible maps $\Gamma_{F}$ are exactly those which are counted by $F$. It follows immediately that $F(x,c)=\Omega_{z}(H(x,z,c))$.\end{proof}

\begin{figure}~
\setlength{\captionindent}{0pt}
   \put(-207,0){\includegraphics[width=0.97\linewidth]{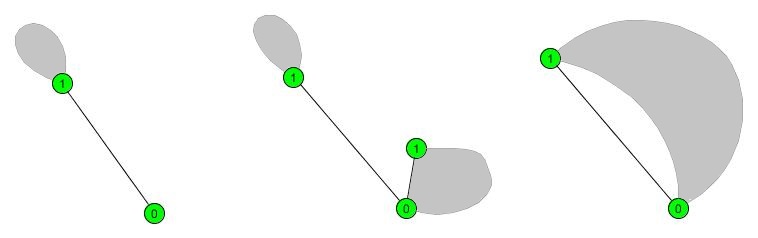}}
   \put(-158,39){$e$}
   \put(-133,1){$v_{0}$}
   \put(-185,74){$v_{1}$}
   \put(-27,43){$e$}
   \put(5,4){$v_{0}$}
   \put(-58,77){$v_{1}$}
   \put(118,46){$e$}
   \put(153,4){$v_{0}$}
   \put(84,85){$v_{1}$}
   \caption{The three different types of graph which contribute to $T(x,a,b,c)$. From left to right, the types are counted by $T_{0}(x,a,b,c)$, $T_{1}(x,a,b,c)$ and $T_{2}(x,a,b,c)$.}
   \label{fig:Cases_for_T}
\end{figure}

\begin{prop}\label{eqnforT} The generating function $T$ is given by the equation
\[T(x,a,b,c)=\Omega_{z}\left(\frac{T(x,a,b,c)-T(x,a,z,c)}{b-z}R(x,a,z)b\right)
+bx(c-a)H(x,b,c)M(x,a,c)+bxaH(x,b,c).\]
\end{prop}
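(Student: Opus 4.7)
The plan is to follow the schema of Proposition \ref{eqnforS}. Let $\Gamma$ be an $N^*$-map counted by $T(x,a,b,c)$, with root edge $e$, root-$0$ vertex $v_0$ and root-$1$ vertex $v_1$. Since $e$ is the unique edge between the two contracted vertices, the inner face $F_0$ adjacent to $e$ (on the side opposite to the outer face) has degree at least $4$: any other vertex on $F_0$ must be a neighbour of $v_0$ (hence at height $1$) or of $v_1$ (hence at height $0$), and since heights of consecutive vertices of $F_0$ alternate there is no room for a triangle. This observation is the geometric basis for the case split displayed in Figure \ref{fig:Cases_for_T}.

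I would distinguish three cases according to the structure of $\Gamma$ at the inner side of $e$. Case $T_0$, contributing $bxaH(x,b,c)$: the decomposition attaches to $v_1$ an $N^*$-map counted by $H$ through the root edge, and a single inner corner at $v_0$ records the local configuration, giving the $a$ factor. Case $T_1$, contributing $bx(c-a)H(x,b,c)M(x,a,c)$: the map splits into two pieces along the inner face $F_0$, an $H$-map at $v_1$ and an $M$-map at $v_0$, and the coefficient $c-a$ arises from an inclusion--exclusion relating outer corners at height $0$ of the combined map to the outer corners of the two pieces, with the $-a$ term correcting for inner corners at $v_0$ that would otherwise be double-counted when the two sides are fused. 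Case $T_2$, the $\Omega_z$ term: this is the generic case, exactly analogous to the third case in the proof of Proposition \ref{eqnforS}. One introduces an auxiliary contracted vertex $u_1$ at height $1$, namely the far endpoint of the edge of $F_0$ adjacent to $v_0$, contracts the $(\geq 1)$-component containing $u_1$, and observes that a bubble counted by $R(x,a,z)$ sits between $v_0$ and $u_1$; the divided difference $(T(x,a,b,c)-T(x,a,z,c))/(b-z)$ then encodes the distribution of the $b$ edges at $v_1$ between the bubble and the residual $T$-map, while the operator $\Omega_z$ re-inserts an arbitrary upper expansion at $u_1$ using the earlier proposition that the generating function for such expansions equals $\Omega_z(z^n)$.

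For each case, the core of the argument is the same two-step verification used throughout this section: exhibit a bijection between the $N^*$-maps in that case and a Cartesian product of simpler objects (possibly with a combinatorial parameter recording an insertion point around a contracted vertex), then check that the $(x,a,b,c)$-weight of the bijection matches the claimed algebraic factor. Summing the three contributions yields the stated equation. The main obstacle I anticipate is pinning down the precise $(c-a)$ inclusion--exclusion in Case $T_1$: tracking how corners of the outer face at height $0$ are created, split, or identified when an $H$-map and an $M$-map are fused at the root edge requires handling a few sub-configurations, in particular whether the single outer corner at $v_0$ lies on the $M$-side or the $H$-side. The $\Omega_z$ manipulation in Case $T_2$, although delicate, is mechanically parallel to the last case of Proposition \ref{eqnforS} and should pose no substantially new difficulty.
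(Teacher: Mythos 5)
Your overall instinct --- decompose at the root edge, with a generic case handled by contracting a $(\geq 1)$-component and extracting an $R$-bubble as in Proposition \ref{eqnforS} --- is the right family of ideas, but the way you assign the three displayed terms to three disjoint classes of maps does not work, and it is not how the argument goes. The disjoint cases are: $T_0$ ($v_0$ has degree $1$), $T_1$ ($v_0$ has degree at least $2$ and removing $e$ disconnects $\Gamma$), and $T_2$ (removing $e$ does not disconnect $\Gamma$). None of these contributes one of the displayed terms directly. Instead one proves three \emph{overlapping} identities: $T_0=xbcH(x,b,c)$; $T_0+T_1=xbcM(x,a,c)H(x,b,c)$ (the bridge case splits $\Gamma$ into an $M$-piece and an $H$-piece); and, crucially, $\tfrac{1}{c}T_1+\tfrac{1}{a}T_2=\Omega_z\bigl(\tfrac{b(T(x,a,b,c)-T(x,a,z,c))}{a(b-z)}R(x,a,z)\bigr)$. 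The last identity is obtained by applying your ``generic'' decomposition to \emph{all} maps with $\deg v_0\geq 2$ --- bridges included --- while deliberately omitting from the weight the single corner of $v_0$ immediately clockwise from $e$; that corner is an outer corner (weight $c$) exactly when $e$ is a bridge and an inner corner (weight $a$) otherwise, which is why the two sub-cases appear with denominators $c$ and $a$. Eliminating $T_0,T_1,T_2$ from these three relations yields $T=\Omega_z(\cdots)+bx(c-a)HM+bxaH$; the factor $(c-a)$ and the term $bxaH$ are artifacts of this elimination, not weights of combinatorial classes.

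Concretely, two of your case assignments would fail if pushed through. A map in which $v_0$ has degree $1$ has no inner corner at $v_0$ at all (its unique corner lies on the outer face), so the degree-one case contributes $xbcH$, not $bxaH$; there is no disjoint class of maps whose generating function is $bxaH(x,b,c)$. Likewise the $\Omega_z$ term is not the generating function of the non-disconnecting maps alone --- it equals $\tfrac{a}{c}T_1+T_2$ --- so restricting your Case $T_2$ analysis to non-bridges would give the wrong count no matter how you weight the corner at $v_0$. Your stated worry about ``pinning down the precise $(c-a)$ inclusion--exclusion'' is well placed: there is no direct bijective account of that factor to be found, and the missing idea is precisely the device of counting the $\deg v_0\geq 2$ maps with one corner's weight suppressed, so that the bridge and non-bridge cases can be treated by a single $\Omega_z/R$ decomposition and then separated algebraically. (A minor further point: when $e$ is a bridge there is no inner face on the far side of $e$, so a case split based on the degree of that face cannot be the basis of Figure \ref{fig:Cases_for_T}.)
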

\begin{proof}
Let $\Gamma$ be an $N^*$-map which is counted by $T$, and let $v_{0}$, $v_{1}$ and $e$ be the root-0 vertex, root-1 vertex and root edge of $\Gamma$, respectively. Let $T_{0}(x,a,b,c)$ be the contribution to $T$ from maps $\Gamma$ in which $v_{0}$ has degree 1. Let $T_{1}(x,a,b,c)$ be the contribution from maps in which $v_{0}$ has degree at least 2, but the removal of $e$ disconnects the graph. Let $T_{2}(x,a,b,c)$ be the contribution from maps in which the removal of $e$ does not disconnect the graph (which implies that $v_{0}$ has degree at least 2). Then \[T(x,a,b,c)=T_{0}(x,a,b,c)+T_{1}(x,a,b,c)+T_{2}(x,a,b,c).\]

First we will calculate $T_{0}$. Assume that $\Gamma$ is counted by $T_{0}$. Then if we remove $e$ and $v_{0}$, we get a map $\Gamma'$ counted by $H(x,b,c)$. Since the removal of $v_{0}$ and $e$ decreases the degree of $v_{1}$ by 1, the number of edges by 1 and the number of outer corners at height 0 by 1, we have the equation
\[T_{0}(x,a,b,c)=xbcH(x,b,c).\]

Now we will consider the case where removing $e$ disconnects the graph. Clearly, this case is enumerated by $T_{0}(x,a,b,c)+T_{1}(x,a,b,c)$.  In this case, let $\Gamma_{0}$ be the component containing $v_{0}$, and let $\Gamma_{1}$ be the component containing $v_{1}$. Since $\Gamma_{0}$ can be any $N^*$-map, where $v_{0}$ is the only contracted vertex, the possibilities for $\Gamma_{0}$ are counted by $R(x,a,c)$. Similarly, $\Gamma_{1}$ can be any $N^{*}$-map where $v_{1}$ is the only contracted vertex, so the possibilities for this are counted by $H(x,b,c)$. The edge $e$ obviously contributes one edge, and adds one to the degree of $v_{1}$, and also increases the number of outer corners at height 0 by 1 (since $v_{0}$ is on the outer face one further time). Hence, 
\[T_{0}(x,a,b,c)+T_{1}(x,a,b,c)=xbcM(x,a,c)H(x,b,c).\]
\begin{figure}~
\setlength{\captionindent}{0pt}
   \put(-207,0){\includegraphics[width=0.97\linewidth]{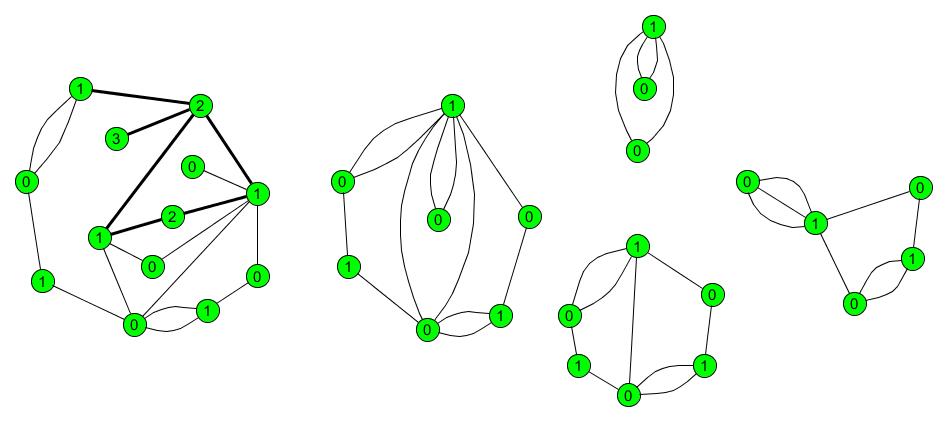}}
   \put(-200,25){\Large$\Gamma$}
   \put(-175,40){$e$}
   \put(-158,31){$v_{0}$}
   \put(-198,50){$v_{1}$}
   \put(-175,68){$u_{1}$}
   \put(-154,59){$e'$}
   \put(-70,35){\Large$\Gamma'$}
   \put(-45,45){$e$}
   \put(-30,30){$v_{0}$}
   \put(-64,57){$v_{1}$}
   \put(-24,145){$u_{1}$}
   \put(24,0){\Large$\Gamma''$}
   \put(52,10){$e$}
   \put(58,0){$v_{0}$}
   \put(32,17){$v_{1}$}
   \put(57,83){$u_{1}$}
   \put(35,140){\Large$\Gamma_{R}$}
   \put(130,60){\Large$\Gamma_{T}$}
   \put(156,40){$v_{0}$}
   \put(142,75){$t_{1}$}
   \caption{On the left is an example of a graph $\Gamma$ counted in the third case of Proposition \ref{eqnforT}, with the $(\geq 1)$-component $\tau$ highlighted. The other graphs shown are $\Gamma'$, $\Gamma''$, $\Gamma_{R}$ and $\Gamma_{T}$, which are involved in the decomposition of $\Gamma$.}
   \label{fig:Last_case_in_T_proof}
\end{figure}

Now we will consider the case where $v_{0}$ has degree at least 2, however we will ignore the corner immediately clockwise from $e$ around $v_{0}$ in calculating the exponent of $c$ and $a$. So, this case is counted by $T_{1}(x,a,b,c)/c+T_{2}(x,a,b,c)/a$.  Let $e'$ be the next edge clockwise around $v_{0}$, and let $u_{1}$ be the other vertex connected to $e'$. Let $\tau$ be the $(\geq1)$-component containing $u_{1}$, and let $(\Gamma',u_{1})$ be the upper contraction of $(\Gamma,\hat{\tau})$. Now let $\Gamma_{R}$ be the map formed by all edges between $v_{0}$ and $u_{1}$ in $\Gamma'$ along with everything contained in the inner faces of these edges. Let $\Gamma''$ be the map formed from $\Gamma'$ by replacing all of $\Gamma_{R}$ with a single edge. Since $v_{0}$ is contracted, $u_{1}$ must have height 1. Now let $\Gamma_{T}$ be the graph formed from $\Gamma''$ by deleting the edge $e$ joining $v_0$ to $v_1$ and identifying $v_{1}$ and $u_{1}$ as the single vertex $t_{1}$. In $\Gamma_{T}$, $t_{1}$ is only adjacent to vertices at height 0 and $v_{0}$ is only adjacent to vertices at height 1, so $\Gamma_{T}$ is counted by the generating function $T$. Assume that $\Gamma_{T}$ contributes $x^na^mb^kc^l$ to $T(x,a,b,c)$. So $\Gamma_{T}$ has $n$ edges, there are $l$ outer corners at height 0, the vertex $t_{1}$ has degree $k$ and the vertex $v_{0}$ in $\Gamma''$ has inner degree $m$. In the transformation from $\Gamma''$ to $\Gamma_{T}$, no inner corners are removed, except perhaps the corner between $e$ and $e'$, which we don't count. Moreover, the sum of the degrees of $u_{1}$ and $v_{1}$ in $\Gamma''$ is $k+1$ and the number of edges in $\Gamma''$ is $n+1$. Note that the transformation from $\Gamma''$ to $\Gamma_{T}$ does not affect the number of $0$'s around the outer face, except perhaps at the corner which we don't count. Hence, the possible graphs $\Gamma''$ are counted by
\[x^{n+1}a^{m}c^{l}(b^{k}z+b^{k-1}z^2+\ldots+bz^k)=x^{n+1}a^{m}c^{l}\frac{bz(b^k-z^k)}{b-z},\]
where $b$ counts the degree of $v_{1}$ and $z$ counts the degree of $u_{1}$. Clearly the possible maps $\Gamma_R$ are counted by $R(x,a,z)$. Hence, the possible maps $\Gamma'$ are counted by
\[\frac{1}{xaz}x^{n+1}a^{m}c^{l}\frac{bz(b^k-z^k)}{b-z}R(a,z,c)=x^{n}a^{m-1}c^{l}\frac{b(b^k-z^k)}{b-z}R(a,z,c).\]
Since $\Gamma$ can be any upper expansion of $\Gamma'$ at $u_{1}$, the possible maps $\Gamma$ are counted by
\[\Omega_{z}\left(x^{n}a^{m-1}c^{l}\frac{b(b^k-z^k)}{b-z}R(a,z,c)\right).\]
Summing over all possible maps $\Gamma_{T}$ gives the contribution 
\[\Omega_{z}\left(\frac{b(T(x,a,b,c)-T(x,a,z,c))}{a(b-z)}R(a,z,c)\right)\]
from this case. Hence, 
\[\frac{1}{c}T_{1}(x,a,b,c)+\frac{1}{a}T_{2}(x,a,b,c)=\Omega_{z}\left(\frac{b(T(x,a,b,c)-T(x,a,z,c))}{a(b-z)}R(a,z,c)\right).\]
Finally, combining the four equations relating $T$, $T_{0}$, $T_{1}$ and $T_{2}$ gives the desired result.
\end{proof}

\begin{prop} The generating function $V$ for $N^{+}$-maps counted by edges is given by the equation
\[V(x)=\Omega_{y}\left(\Omega_{z}\left(\frac{1}{xyz}R(x,y,z)-1\right)\right).\]
\end{prop}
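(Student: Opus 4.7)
The plan is to establish a bijection between $N^+$-maps (counted by $V(x)$) and triples $(\Gamma_R, \hat{\tau_0}, \hat{\tau_1})$, where $\Gamma_R$ is a non-trivial $R$-map, $\hat{\tau_0}$ is a lower expansion at the root-$0$ vertex, and $\hat{\tau_1}$ is an upper expansion at the root-$1$ vertex.

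Given an $N^+$-map $\Gamma$, first contract $\hat{\tau_0}$ (the extension of the $(\leq 0)$-component containing the root-$0$ vertex) to a single contracted vertex $v_0$, and similarly contract $\hat{\tau_1}$ to $v_1$. Since the outer face of the resulting map may have degree greater than $2$, augment by adding a new edge $e'$ between $v_0$ and $v_1$, positioned in the outer face and to the left of the root edge $e$, so that the new outer face is the $2$-gon bounded by $e$ and $e'$. The result is an $R$-map $\Gamma_R$.

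Conversely, given a non-trivial $R$-map $\Gamma_R$ in which $v_0, v_1$ have degrees $d_0, d_1$, first apply the lower expansion at $v_0$, then remove the non-root outer edge $e'$, and finally apply the upper expansion at $v_1$ to recover an $N^+$-map $\Gamma$. Since $v_0$ has exactly one outer corner in $\Gamma_R$, the lower expansion uses inner degree $d_0 - 1$, giving generating function $\Omega_y(y^{d_0 - 1})$. The upper expansion at $v_1$ is done in $\Gamma_R \setminus e'$, where $v_1$ has degree $d_1 - 1$, giving $\Omega_z(z^{d_1 - 1})$. Counting edges gives $|E(\Gamma)| = |E(\Gamma_R)| - 1 + |E(\hat{\tau_0})| + |E(\hat{\tau_1})|$, so summing over all non-trivial $R$-maps and applying linearity of the operators yields
\[
V(x) = \Omega_y \Omega_z\!\left(\frac{R(x,y,z)}{xyz} - 1\right),
\]
where the $-1$ excludes the single-edge $R$-map (which cannot arise from this construction, since there is no non-root outer edge to remove).

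The main obstacle will be verifying the augmentation step: showing that the edge $e'$ can always be placed uniquely so that the new outer face becomes a $2$-gon, and that every non-trivial $R$-map arises from exactly one $N^+$-map via this construction. The asymmetry in the exponents (inner degree of $v_0$ in $\Gamma_R$ versus degree of $v_1$ in $\Gamma_R \setminus e'$) arises naturally from performing the $v_0$ expansion before removing $e'$ and the $v_1$ expansion afterward; both choices rely on the earlier propositions characterising lower expansions at height $0$ by inner degree and upper expansions at height $1$ by full degree.
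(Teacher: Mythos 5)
Your proof follows essentially the same route as the paper's: contract the $(\leq 0)$-component at $v_0$ and the $(\geq 1)$-component at $v_1$, close up the outer face with an extra edge $e'$ to obtain a map counted by $R(x,y,z)-xyz$, and reverse the construction using the two expansion propositions via $\Omega_y$ and $\Omega_z$. The only substantive difference is that you perform the lower expansion at $v_0$ before deleting $e'$, which makes the exponent bookkeeping (inner degree $d_0-1$ at $v_0$ versus degree $d_1-1$ at $v_1$ after removing $e'$) somewhat more transparent than the paper's ordering, but the decomposition and the resulting formula are identical.
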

\begin{proof}
Let $\Gamma$ be an $N^{+}$-map, and let $v_{0}$, $v_{1}$ and $e$ be the root-0 vertex, root-1 vertex and root edge of $\Gamma$ respectively. Let $\tau_{0}$ be the $(\leq0)$-component containing $v_{0}$ and let $\tau_{1}$ be the $(\geq1)$-component containing $v_{1}$. Now let $(\Gamma',v_{0})$ be the lower contraction of $(\Gamma,\hat{\tau_{0}})$ and let $(\Gamma'',v_{1})$ be the upper contraction of $(\Gamma',\hat{\tau_{1}})$. Finally let $\Gamma_{R}$ be the map obtained by adding another edge $e'$ to $\Gamma''$ between $v_{0}$ and $v_{1}$, on the outside of the map, so that $e$ and $e'$ are the only edges on the outer face of $\Gamma_{R}$. Since $v_{0}$ and $v_{1}$ are contracted vertices in $\Gamma_{R}$, and the outer face has degree $2$, $\Gamma_{R}$ is counted by $R$. Since there are at least two edges between $v_{0}$ and $v_{1}$ in $\Gamma_{R}$, this map cannot contain only a single edge. However, for any other map $\Gamma_{R}$ counted by $R$, the transformations between $\Gamma$ and $\Gamma_{R}$ can be reversed, so $\Gamma_{R}$ can be any other map counted by $R$. Hence, the possible maps $\Gamma_{R}$ are counted by
\[R(x,y,z)-xyz,\]
where $x$ counts the edges, $y$ counts the degree of $v_{0}$ and $z$ counts the degree of $v_{1}$. Since the transformation from $\Gamma_{R}$ to $\Gamma''$ just removes one edge between $v_{0}$ and $v_{1}$, the possible maps $\Gamma'$ are counted by
\[\frac{1}{xyz}(R(x,y,z)-xyz)=\frac{1}{xyz}R(x,y,z)-1.\]
Since $(\Gamma',\hat{\tau_{1}})$ can be any upper expansion of $(\Gamma'',v_{1})$, the possible graphs $\Gamma'$ are counted by
\[\Omega_{z}\left(\frac{1}{xyz}R(x,y,z)-1\right),\]
where $x$ counts the edges in $\Gamma'$ and $y$ counts the degree of $v_{0}$. Similarly, since $(\Gamma,\hat{\tau_{0}})$ can be any upper expansion of $(\Gamma',v_{0})$, the possible graphs $\Gamma$ are counted by
\[V(x)=\Omega_{y}\left(\Omega_{z}\left(\frac{1}{xyz}R(x,y,z)-1\right)\right).\]
\end{proof}

\section{The algorithms}
From these functional equations, we use a dynamic program to calculate the coefficients in polynomial time. For the case of general rooted planar Eulerian orientations, this is possible, since if we calculate the coefficient of $x^n$ in each of the functions $T,S,R,H,F$ in that order, for $n=0,1,2,\dots$, then each of these coefficients is determined only by values which have been previously calculated. The coefficients were calculated {\em modulo} a prime smaller than $2^{31},$ repeated for several different primes, sufficient to calculate the coefficient by use of the Chinese Remainder Theorem. In this way we calculated 90 terms  of the generating function for planar Eulerian orientations counted by edges $U(x),$ and 100 terms for the generating function for 4-valent  planar Eulerian orientations counted by vertices, $A(x).$

\section{Analysis of generating functions}
We first tried to analyse these series by the method of differential approximants (DAs)  \cite{GJ72, G89, GJ09}. The results were not totally straightforward. Assuming a power-law singularity of the form $$f(x) \sim C(1-x/x_c)^\alpha,$$ then for $U(x)$ we found the closest singularity to the origin to be at $x_c \approx 0.07957736,$ with an exponent around $\alpha \approx 1.24.$ However there was a second singularity very close by, at $x \approx 0.0795789,$ with an exponent around $2.26,$ and a third, less precisely located singularity at around $x \approx 0.0798,$ with a complex exponent the value of which is irrelevant. 

For $A(x)$ we found essentially identical results, just with a changed radius of convergence. In particular we found the closest singularity to the origin to be at $x_c \approx 0.04594404,$ with an exponent around $1.23.$ There was a second singularity very close by, at $x \approx 0.04594449,$ with an exponent around $2.23,$ and a third, less precisely located, at around $x \approx 0.0459,$ with a complex exponent the value of which is irrelevant. 

This behaviour, where one has two singularities very close together, with an exponent separated by about 1.0, is known to be characteristic of a confluent singularity, and more precisely, a confluent singularity involving a logarithmic term. To illustrate this explicitly, we constructed a test series, chosen by our expectation that series $A(x)$ at least should be of the form given in the introduction, being derived from Kostov's \cite{K00} solution of the six-vertex model. It is  $$f(x)=\frac{-x(1-\mu x)}{\log(1-\mu x)},$$ where, anticipating our later results, we take $\mu = 4\pi=12.56637061435917=1/0.0795774715459476678844418.$ Then $$[x^n]f(x)=c\cdot \mu^n/(n^2\cdot \log^2{n}),$$ \cite{FS09}. We analysed the series with 3rd order differential approximants, using a series of length 50 terms, i.e. up to $O(x^{50}).$ 

This function is not D-finite, and is not well-represented by DAs. Indeed the DAs are found to have two very close singularities, the most precisely located one is at $0.07957733,$ with an exponent $1.30-1.32,$ the other is at $0.0795782-0.0795786,$ with exponent around $2.30$ plus a nearby third singularity, much less precisely located, at  around $0.07955-0.07958$ with an exponent of $2.5-3.5$ plus a small imaginary component. That is to say, very similar behaviour to that observed above for  $A(x)$ and $U(x).$ 

The similarity in behaviour of the test series and both the series $A(x)$ and $U(x)$ is very suggestive. Indeed, it is on this basis that we were led to conjecture that the radius of convergence of $U(x)$ is $1/(4\pi).$ Similarly, the radius of convergence of $A(x)$ is conjectured to be $1/(4\sqrt{3} \pi).$

Having seen that the method of DAs has difficulties in estimating the critical exponent in this case, we turned to ratio-based methods.

If the generating function behaves as in our test series, then
\begin{equation}\label{eqn:coeff}
[x^n]f(x) = \frac{c\cdot \mu^n}{n^2}\left(\frac{1}{\log^2{n}}+\frac{a}{\log^3{n}} +\frac{b}{\log^4{n}} + \frac{c}{\log^5{n}} + o\left ( \frac{1}{\log^5{n}} \right ) \right ).
\end{equation}
 To extract asymptotics from numerical data is difficult when successive terms are only weaker by a factor of a logarithm, which varies but slowly unless one has a vast number of terms.

 The ratio of successive coefficients in this case behaves as  
 $$r_n = \frac{[x^n]f(x)}{[x^{n-1}]f(x)} = \mu \left (1 - \frac{2}{n} - \frac{2}{n\log{n}} \left (1+ \frac{c_1}{\log{n}} +  \frac{c_2}{\log^2{n}} +  \frac{c_3}{\log^3{n}}  \right ) + o\left ( \frac{1}{n\log^4{n}} \right ) \right ). $$
We show in figures \ref{fig:prat1} and \ref{fig:vrat1} the ratios for $U(x)$ and $A(x)$ plotted against $1/n.$ Both plots exhibit slight concavity, due to the logarithmic corrections. 

\begin{figure}[ht]
\setlength{\captionindent}{0pt}
\begin{minipage}{0.48\textwidth}
   \includegraphics[width=0.97\linewidth]{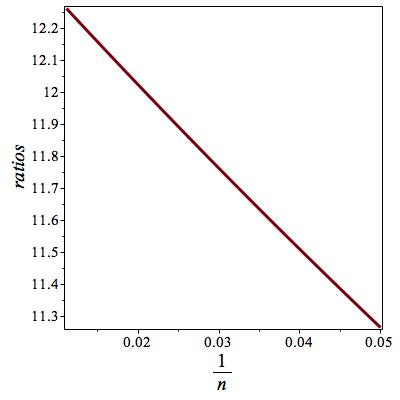} 
   \caption{Ratio plot of coefficients of $U(x).$ }
   \label{fig:prat1}
\end{minipage}\hfill
\begin{minipage}{0.48\textwidth}
   \includegraphics[width=0.97\linewidth]{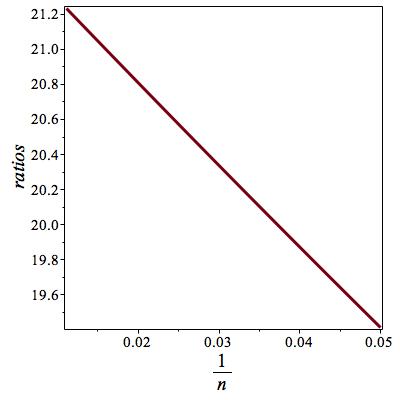} 
   \caption{Ratio plot of coefficients of $A(x).$  }
   \label{fig:vrat1}
\end{minipage}
\end{figure}

If we eliminate the O$(1/n)$ term by constructing linear intercepts, \begin{equation}\label{eqn:li}
l_n = n\cdot r_n - (n-1) \cdot r_{n-1} = \mu \left ( 1 + \frac{2}{n\log^2{n}} +\frac{4c_1}{n\log^3{n}}+\frac{6c_2}{n\log^4{n}}+o\left (\frac{1}{n\log^4{n}} \right ) \right )
\end {equation}
the corresponding plots of the linear intercepts against $1/(n\cdot \log^2{n})$ are shown in figures  \ref{fig:plin1} and \ref{fig:vlin1}. Note that the ordinate is compressed by about a factor of 10, and secondly, the plot exhibits {\em more} curvature, presumably reflecting competition between subdominant logarithmic terms. Indeed, from the asymptotics, it is clear that this sequence must eventually have a positive gradient as $n$ increases, so must pass through a maximum. We will see below that this occurs for sufficiently large $n.$

\begin{figure}[ht]
\setlength{\captionindent}{0pt}
\begin{minipage}{0.48\textwidth}
   \includegraphics[width=0.97\linewidth]{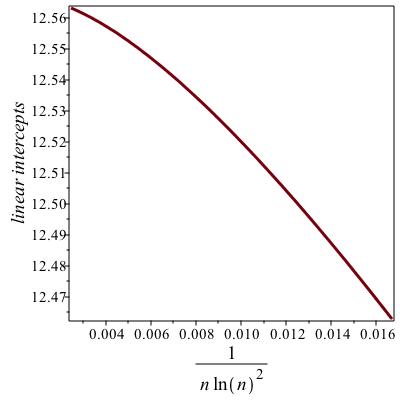} 
   \caption{Plot of linear intercepts of ratios of $U(x)$ vs. $1/n\log^2{n}.$ }
   \label{fig:plin1}
\end{minipage}\hfill
\begin{minipage}{0.48\textwidth}
   \includegraphics[width=0.97\linewidth]{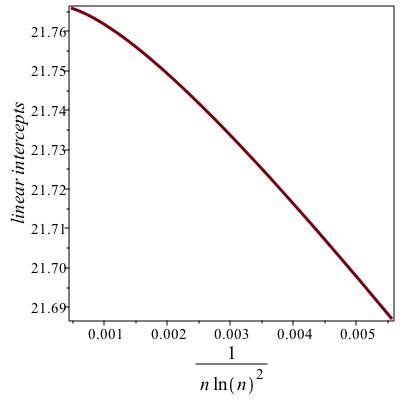} 
   \caption{Plot of linear intercepts of ratios of $A(x)$ vs. $1/n\log^2{n}.$  }
   \label{fig:vlin1}
\end{minipage}
\end{figure}

These ratios are behaving quite smoothly, and it would be desirable to have many more. It is not realistic to get vastly more terms exactly, but we can get them approximately with high enough precision for our purposes by using the method of series extension.
The idea behind this method to obtain further ratios (or terms) is simply to use the method of differential approximants {\em to predict subsequent ratios/terms}. The detailed description as to how this is done is given in \cite{G16}. 

Suffice it to say, every differential approximant naturally reproduces exactly all coefficients used in its derivation, and, being a D-finite differential equation, which implies the existence of a linear recurrence for the coefficients, therefore implies the value of {\em all} subsequent coefficients. These subsequent coefficients will not be exact (unless the solution is D-finite of sufficiently low degree that the DA is exact), but are approximate. It is to be expected that the first approximate coefficient will be the most accurate, while the accuracy will decline with increasing order of predicted coefficients. In practice we construct many DAs. We then calculate the average of the predicted coefficients (or ratios) across all constructed DAs, as well as the standard deviation, and have experimentally found the true error to be between 1 and 2 standard deviations. 

The number of terms we can predict varies from problem to problem. In this case we are extremely fortunate, in that the standard deviation of the coefficient estimates increases extremely slowly, and so we are confident in predicting 1000 extra ratios for both series which we expect to be accurate to more than 10 significant digits. That is more than enough for our purposes. Using these additional terms, we reconstruct the plot shown in Figure \ref{fig:vlin1} in Figure \ref{fig:vlin1000-1}, using a further 1000 ratios. Note that the locus passes through a maximum, reflecting competition between the subdominant logarithmic terms, and the linear intercepts are now decreasing with increasing $n,$ as predicted by the asymptotic expression (\ref{eqn:li}). In Figure \ref{fig:vlin1000-2}, we show the same plot, but with the abscissa restricted to ratios corresponding to $700 \le n \le 1100.$ The value of the ordinate at the origin in Figure \ref{fig:vlin1000-2}  is precisely $4\sqrt{3}\pi,$ and the extrapolated locus is convincingly going through the origin.

\begin{figure}[ht]
\setlength{\captionindent}{0pt}
\begin{minipage}{0.48\textwidth}
   \includegraphics[width=0.97\linewidth]{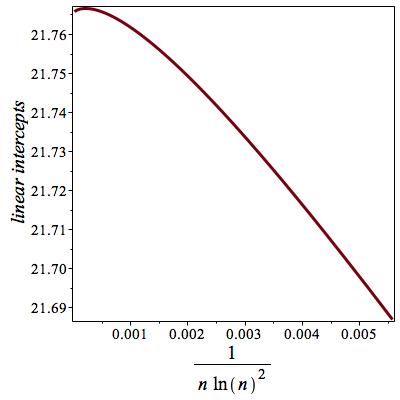} 
   \caption{Plot of linear intercepts of ratios of $A(x)$ vs. $1/n\log^2{n},$ using an extra 1000 ratios. }
   \label{fig:vlin1000-1}
\end{minipage}\hfill
\begin{minipage}{0.48\textwidth}
   \includegraphics[width=0.97\linewidth]{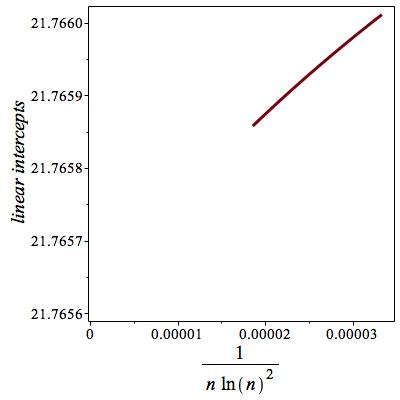} 
   \caption{Plot of linear intercepts of ratios of $A(x)$ vs. $1/n\log^2{n},$ using ratios 700 to 1100.  }
   \label{fig:vlin1000-2}
\end{minipage}
\end{figure}

The corresponding plots for planar orientations, given by the generating function $U(x),$ is shown in figures \ref{fig:plin1000-1} and \ref{fig:plin1000-2}, where now the value of the ordinate at the origin in Figure \ref{fig:plin1000-2} is precisely $4\pi,$ and again the extrapolated locus is convincingly going through the origin.

\begin{figure}[ht]
\setlength{\captionindent}{0pt}
\begin{minipage}{0.48\textwidth}
   \includegraphics[width=0.97\linewidth]{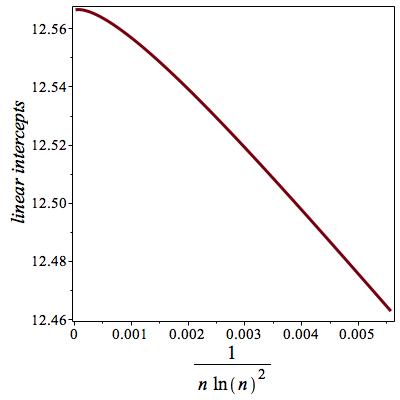} 
   \caption{Plot of linear intercepts of ratios of $U(x)$ vs. $1/n\log^2{n},$ using an extra 1000 ratios. }
   \label{fig:plin1000-1}
\end{minipage}\hfill
\begin{minipage}{0.48\textwidth}
   \includegraphics[width=0.97\linewidth]{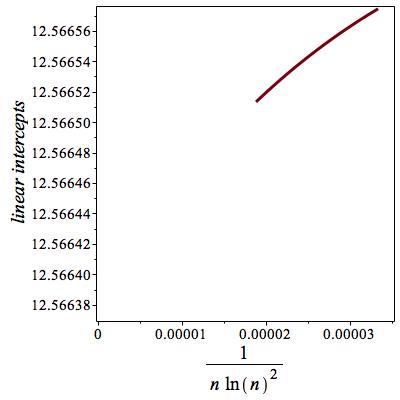} 
   \caption{Plot of linear intercepts of ratios of $U(x)$ vs. $1/n\log^2{n},$ using ratios 700 to 1100.  }
   \label{fig:plin1000-2}
\end{minipage}
\end{figure}

Now that we have good grounds to conjecture the exact value of the critical points, we are in a better position to estimate the exponent. From \cite{FS09}, p.385  we see that if
$$f(x) = (1-\mu \cdot x)^{-\alpha}\left ( \frac{1}{\mu \cdot x} \log \frac{1}{1-\mu \cdot x} \right )^\beta,$$ then 
$$[x^n]f(x) = \frac{ \mu^n \cdot n^{\alpha-1}}{\Gamma(\alpha)}(\log{n})^\beta \left(1+\frac{c_1}{\log{n}}+\frac{c_2}{\log^2{n}} +\frac{c_3}{\log^3{n}} + \frac{c_4}{\log^4{n}} +o\left ( \frac{1}{\log^4{n}} \right )  \right ),$$ where $$c_k={\beta \choose k}\Gamma{(\alpha)} \frac{d^k}{ds^k}\frac{1}{\Gamma(s)}\bigg\rvert_{s=\alpha}.$$ When $\alpha$ is a negative integer, the evaluation of the constants must be interpreted as a limiting case as the $\Gamma$ function diverges, so that certain constants vanish. In particular, provided that $\alpha$ is a negative integer and $\beta$ is not zero or a positive integer, one has
$$[x^n]f(x) = { \mu^n \cdot n^{\alpha-1}}(\log{n})^\beta \left(\frac{c_1}{\log{n}}+\frac{c_2}{\log^2{n}} +\frac{c_3}{\log^3{n}} + \frac{c_4}{\log^4{n}} +o\left ( \frac{1}{\log^4{n}} \right ) \right ).$$

The ratio of successive coefficients is in the general case $$r_n = \frac{[x^n]f(x)}{[x^{n-1}]f(x)} = \mu \left (1 + \frac{\alpha-1}{n} + \frac{\beta}{n\log{n}}- \frac{c_1}{n\log^2{n}} +  o\left ( \frac{1}{n\log^2{n}} \right )   \right ), $$ but in the case that $\alpha$ is a negative integer and $\beta$ is not zero or a positive integer, one has $$r_n = \frac{[x^n]f(x)}{[x^{n-1}]f(x)} = \mu \left (1 + \frac{\alpha-1}{n} + \frac{\beta-1}{n\log{n}}+ \frac{d}{n\log^2{n}} + o\left ( \frac{1}{n\log^2{n}} \right )  \right ), $$ where $d=-c_2/c_1.$

So one can estimate $\alpha$ from the sequence $$\alpha_n = \left ( \frac{r_n}{\mu} -1 \right )\cdot n +1  = \alpha +\frac{\beta}{\log{n}}-\frac{d}{\log^2{n}} + o\left ( \frac{1}{\log^2{n}} \right ) .$$ Plots of $\alpha_n$ against $1/\log{n}$ for both $U(x)$ and $A(x)$ respectively are shown in figures \ref{fig:palpha} and \ref{fig:valpha}, and it can be seen that having many more than 100 terms is essential. In fact the minimum in both plots occurs at around $n=100,$ and it is only with our extended data that the limit $\alpha = -1$ becomes plausible.

\begin{figure}[ht]
\setlength{\captionindent}{0pt}
\begin{minipage}{0.48\textwidth}
   \includegraphics[width=0.97\linewidth]{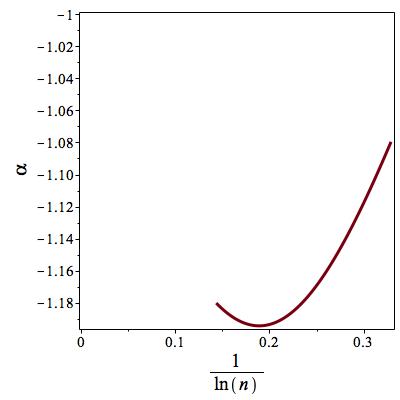} 
   \caption{Plot of exponent $\alpha$ estimates from $U(x)$ vs. $1/\log{n},$ using an extra 1000 ratios. }
   \label{fig:palpha}
\end{minipage}\hfill
\begin{minipage}{0.48\textwidth}
   \includegraphics[width=0.97\linewidth]{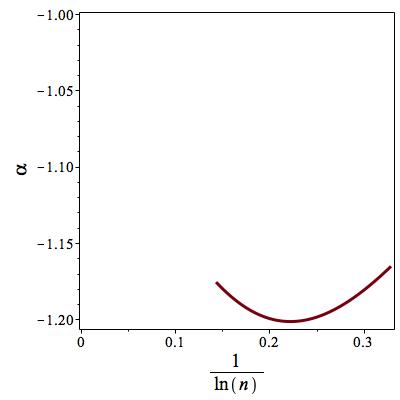} 
   \caption{Plot of exponent $\alpha$ estimates from $A(x)$ vs. $1/\log{n},$ using an extra 1000 ratios.}
   \label{fig:valpha}
\end{minipage}
\end{figure}

To take into account higher-order terms in the asymptotics, we attempted a linear fit to the assumed form (also assuming $\alpha$ is a negative integer, otherwise $\beta$ replaces $\beta-1$),
\begin{equation}\label{eqn:extrap}
\left ( \frac{r_n}{\mu} -1 \right )\cdot n +1  = \alpha +\frac{\beta-1}{\log{n}}-\frac{d}{\log^2{n}} +o\left ( \frac{1}{\log^2{n}} \right ) .
\end{equation}
 We did this by solving the linear system given by setting $n=m-1, \,\, n=m, \,\, n=m+1$ in the preceding equation, and solving for $\alpha,\,\, \beta, \,\, d,$
with $m$ ranging from 20 to the maximum possible value 1100. We obtain an $m$-dependent sequence of estimates of the terms $\alpha,\,\, \beta, \,\, d,$ which we show plotted against appropriate powers of $1/m.$ 
These are shown in figures \ref{fig:palpha1} and \ref{fig:pbeta} for planar orientations. (The corresponding plots for 4-valent orientations are similar in appearance, so are not shown).

In this way we see that both $\alpha$ and $\beta$ are plausibly going to $-1,$ as appropriate for a singularity of the form $$\frac{c\cdot\mu \cdot x \cdot(1-\mu \cdot x)}{\log(1 - \mu \cdot x)}.$$

\begin{figure}[ht]
\setlength{\captionindent}{0pt}
\begin{minipage}{0.48\textwidth}
   \includegraphics[width=0.97\linewidth]{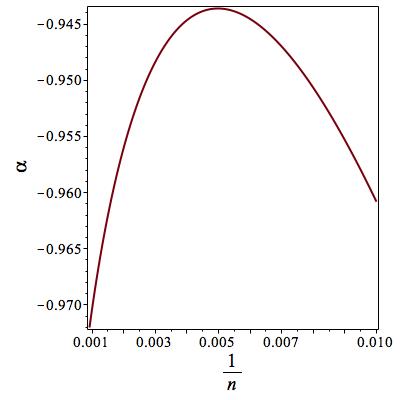} 
   \caption{Plot of exponent $\alpha$ estimates from eqn (\ref{eqn:extrap}). }
   \label{fig:palpha1}
\end{minipage}\hfill
\begin{minipage}{0.48\textwidth}
   \includegraphics[width=0.97\linewidth]{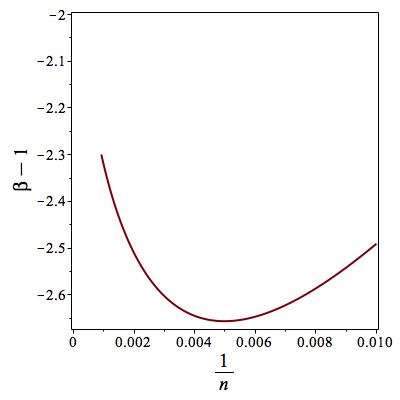} 
   \caption{Plot of exponent $\beta-1$ estimates from eqn (\ref{eqn:extrap}).}
   \label{fig:pbeta}
\end{minipage}
\end{figure}

Finally, if we accept that $\alpha = -1,$ we can refine the estimate of $\beta,$ since in that case
 \begin{equation}\label{eqn:beta2}
 \left( \frac{r_n}{\mu}-1+\frac{2}{n}\right ) n\log{n} = \beta-1+O\left (\frac{1}{\log{n}}\right ).
 \end{equation}
 The result is shown in Figure \ref{fig:pbeta2} which is plausibly tending to $\beta=-1,$ though the fact that the abscissa is $1/\log{n}$ means that one would really need many more terms, around 22,000, even to get to $0.1$ on the abscissa.

 \begin{figure}[htbp] 
 \centering
   \includegraphics[width=3in]{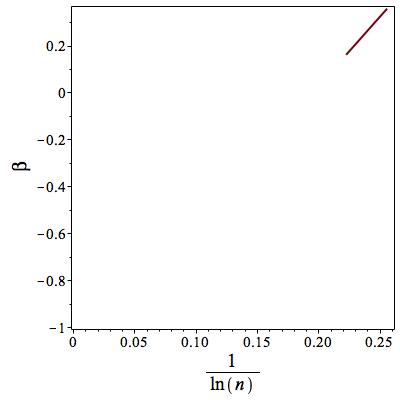} 
   \caption{Plot of exponent $\beta$ estimates from eqn (\ref{eqn:beta2}).}
   \label{fig:pbeta2}
\end{figure}

\section{Conclusion}
 We have derived a system of functional equations characterising the ordinary generating functions $A(x)$ and $U(x)$ for 4-valent planar Eulerian orientations counted by vertices and for planar Eulerian orientations counted by edges respectively. We have then developed a dynamic programming algorithm to generate coefficients of $A(x)$ and $U(x)$ of length 100 and 90 terms respectively. We then used the method of series extension to generate a further 1000 terms in each case with an accuracy of, we believe, at least 10 significant digits.

We analysed the exact and extended series in order to estimate the asymptotics. 
We found that $$A(x) \sim const.  (1 - \mu_4 z)/\log(1 - \mu_4 z)$$ and $$U(x) \sim const.  (1 - \mu z)/\log(1 - \mu z),$$ where we conjecture that $\mu_4 = 4\sqrt{3} \pi \approx 21.76559$ and  that $\mu = 4 \pi \approx 12.56637.$ 

Given this proposed asymptotic form, the generating function cannot be D-finite. Attempts to discover D-algebraic solutions from the known exact coefficients have been unsuccessful, but this could well be because we have insufficient terms.

Nevertheless, being able to conjecture the exact value of the growth constants is quite remarkable, and suggests that the problems may be exactly solvable.

After completion of this work, we realised that our conjecture for 4-valent Eulerian orientations is, with hindsight, found in the work of Kostov \cite{K00}, though the different notation, and field theoretic methods used there make the connection difficult to see. In Kostov's language, one restricts to the special case of the 6-vertex model known as the F-model (a restriction in which the weights of the different types of vertices are all equal), and sets the parameter $\lambda =\frac{1}{3},$ (see eqn. (2.1) in \cite{K00}), then from eqns. (2.10) and (3.40) the critical temperature is predicted to be $T*=4\sqrt{3}\pi.$

\section{Acknowledgements}
We wish to thank Mireille Bousquet-M\'elou for introducing us to this problem and for many stimulating discussions on this topic. In particular we thank her for pointing out the simplified version of the system of equations for the 4-valent case. We also wish to thank the referees for their critical reading of the manuscript which resulted in a significantly clearer presentation. AEP wishes to thank MASCOS and ACEMS for financial support through a PhD top-up scholarship.



\begin{thebibliography}{99}
\bibitem{BBDP16}
N Bonichon, M Bousquet-M\'elou, P Dorbec and C Pennarun,
On the number of planar Eulerian orientations. arXiv 1610.09837(2016)

\bibitem{FS09}
P Flajolet and R Sedgewick, {\em Analytic Combinatorics.} Cambridge University Press, Cambridge (2009).

\bibitem{FZ08} S Felsner and F Zickfeld, On the number of planar orientations with prescribed degrees, Elec. J. Comb., {\bf 15} 41pp (2008).

 \bibitem{G89}
A J Guttmann,
in {\emph Phase Transitions and Critical Phenomena,} vol 13, eds. C Domb and J Lebowitz, Academic Press, London and New York, (1989).

\bibitem{G16} A J Guttmann, 
Series extension: Predicting approximate series coefficients from a finite number of exact coefficients,  J. Phys A: Math. Theor. {\bf 49} 415002  (27pp) (2016).

\bibitem{GJ09}
A J Guttmann and I Jensen,
 {Series Analysis}. Chapter 8 of \emph{Polygons, Polyominoes and Polycubes} Lecture Notes in Physics {\bf 775}, ed. A J Guttmann, Springer, (Heidelberg), (2009).

\bibitem{GJ72}
A J Guttmann and G S Joyce, 
A new method of series analysis in lattice statistics,  J Phys A, {\bf 5}  L81-- 84, (1972).

\bibitem{K00} I K Kostov, Exact solution of the six-vertex model on a random lattice, Nucl. Phys B {\bf 575}(3) 513-534 (2000).

\bibitem{T63} W T Tutte, A census of planar maps, Canad. J. Math., {\bf 15} 249-271, (1963).

\bibitem{ZJ00} P Zinn-Justin, The six-vertex model on random lattices, Europhys. Lett. {\bf 50}(1) 15-21 (2000).

\end{thebibliography}
\end{document}